\documentclass[11pt,oneside,reqno]{amsart}
\usepackage[margin=0.9in]{geometry}

\usepackage{amsmath,amssymb,amsthm}
\usepackage{graphicx}
\usepackage[expansion=false]{microtype}
\usepackage[hidelinks]{hyperref}

\graphicspath{{figures/}}

\theoremstyle{plain}
\newtheorem{theorem}{Theorem}
\newtheorem{proposition}[theorem]{Proposition}
\newtheorem{lemma}[theorem]{Lemma}
\newtheorem{corollary}[theorem]{Corollary}
\theoremstyle{definition}
\newtheorem{remark}[theorem]{Remark}
\newtheorem{question}[theorem]{Question}

\DeclareMathOperator{\dist}{dist}
\DeclareMathOperator{\area}{area}
\DeclareMathOperator{\length}{length}
\newcommand{\Sph}{\mathbb{S}^{2}}
\newcommand{\R}{\mathbb{R}}
\newcommand{\bta}[2]{\beta^{#1,#2}}

\title[Mean distance to a curve on the sphere]
{The mean distance to a simple closed curve on the sphere}

\author{J. V. M. Pimentel}

\date{\today}

\subjclass[2020]{Primary 49Q10; Secondary 53A04, 52A40, 51M16}
\keywords{spherical curve, inner parallel,
sphere-filling rope, isoperimetric inequality}

\begin{document}

\begin{abstract}
Kimberling's Problem~10 asks for a simple closed curve of prescribed length
$L$ (in particular, $L=4\pi$) on the unit sphere
minimizing the mean geodesic distance $\mathcal{J}$ from a point of the sphere to the
curve. For a positive integer $n$, put $\vartheta_{n}=\pi/(2n)$ and
$L_{n}=2\pi/\sin\vartheta_{n}$. We show
that the minimum of $\mathcal{J}$ over rectifiable simple closed curves of length at most
$L_{n}$ equals $\vartheta_{n}-\tan(\vartheta_{n}/2)$, that it is attained only by curves of length
exactly $L_{n}$, and that the sphere-filling ropes $\bta{n}{k}$ of Gerlach and
von der Mosel attain it. Kimberling's case is $n=3$: at $L=4\pi$ the minimum is
$\pi/6+\sqrt{3}-2=0.255649\ldots$, attained by an explicit six-arc curve and by
its mirror image. For $L\le2\pi$ we determine $J(L)$, the infimum of
$\mathcal{J}$ over curves of length $L$, exactly: it equals $\pi/2-L/(2\pi)$,
attained precisely by the circles of length $L$. At the lengths $L_{n}$ we do not
classify all minimizers, but show that every one of
them bisects the sphere into two disks of area $2\pi$ and inradius
$\vartheta_{n}$ whose inward collars have the largest possible area at every
depth. The great circle is the only minimizer for $n=1$, and the $\bta{n}{k}$
are, up to congruence, the only ones of thickness at least $\sin\vartheta_{n}$.
For arbitrary $L$ the function $J$ is nonincreasing, and
together with the above this brackets it between two explicit values.
\end{abstract}

\maketitle

\section{Introduction}

Kimberling's Problem~10 \cite{Kimberling} reads as follows.
{\tiny
\begin{quote}
Let $S$ be given by $x^{2}+y^{2}+z^{2}=1$. Suppose that $C$ is a simple closed
curve on $S$. For any point $P$ on $S$, define the distance from $P$ to $C$ to
be the minimal arclength from $P$ to $Q$ as $Q$ goes around $C$. Find
parametric equations for such a curve $C$ of length $4\pi$ that minimizes the
mean distance from $S$ to $C$; that is, the mean distance taken over all points
$P$ on $S$. The solution must include proof of minimization. Can you solve this
problem with arbitrary $L>2\pi$ in place of $4\pi$?
\end{quote}
}
\noindent
Write $\Sph$ for the unit sphere. Throughout, \emph{curve} means a
rectifiable simple closed curve $\mathcal{C}\subset\Sph$, and $L$ denotes its
length. Define
\begin{equation}\label{eq:Jcal}
  \mathcal{J}(\mathcal{C})=\frac{1}{4\pi}\int_{\Sph}\dist(x,\mathcal{C})\,dA(x),
  \qquad
  \dist(x,\mathcal{C})=\min_{y\in \mathcal{C}}\arccos\langle x,y\rangle ,
\end{equation}
with $dA$ the area element of $\Sph$; we write $\dist(x,y)=\arccos\langle
x,y\rangle$ for the geodesic distance between two points as well. Put
\begin{equation}
  J(L)=\inf\bigl\{\mathcal{J}(\mathcal{C}):\ \length(\mathcal{C})=L\bigr\}.
\end{equation}

\begin{proposition}\label{prop:short}
Every rectifiable closed curve $\mathcal{C}\subset\Sph$ of length $L$, simple or
not, satisfies
\begin{equation}\label{eq:short}
  \mathcal{J}(\mathcal{C})\ \ge\ \frac{\pi}{2}-\frac{L}{2\pi} .
\end{equation}
For $L\le2\pi$ the circles of length $L$ attain it, and no other simple closed
curve does, so
\begin{equation}
  J(L)=\frac{\pi}{2}-\frac{L}{2\pi},\qquad 0<L\le2\pi .
\end{equation}
\end{proposition}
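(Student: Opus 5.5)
Our plan is to write $\mathcal{J}$ via the layer-cake formula and bound the area of each tube around $\mathcal{C}$ by Steiner's estimate on the sphere. Let $T_r=\{x:\dist(x,\mathcal{C})<r\}$. Then
\begin{equation*}
  4\pi\,\mathcal{J}(\mathcal{C})=\int_0^{\pi}\bigl(4\pi-\area(T_r)\bigr)\,dr .
\end{equation*}
The key estimate is
\begin{equation*}
  \area(T_r)\le \min\{4\pi,\;2L\sin r\}, \qquad 0\le r\le\pi .
\end{equation*}
We would prove it by parametrizing $\mathcal{C}$ by arclength $\gamma(s)$ with unit normal $\nu(s)=\gamma\times\gamma'$. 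Every point at distance $\rho<r$ from $\mathcal{C}$ is reached by a minimizing geodesic hitting $\mathcal{C}$ orthogonally. This holds wherever $\gamma'$ exists, and the non-differentiable points and the endpoint cases of the minimizing foot form a set contributing zero area, by Lipschitz-image measure arguments. Hence $T_r$ is contained in the image of
\begin{equation*}
  F(s,t)=\cos t\,\gamma(s)+\sin t\,\nu(s), \qquad |t|<r .
\end{equation*}
We must avoid needing the curvature of $\gamma$, which may not exist. So instead of using the Jacobian of $F$, we apply the coarea formula to $f=\dist(\cdot,\mathcal{C})$, which has $|\nabla f|=1$ a.e. This gives $\area(T_r)=\int_0^r \mathcal{H}^1(\{f=\rho\})\,d\rho$. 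It then suffices to show $\mathcal{H}^1(\{f=\rho\})\le 2L\sin\rho$ for a.e.\ $\rho$.

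To get this level-set bound, the cleanest route is polygonal approximation. For an inscribed geodesic polygon $P$ with edges of lengths $\ell_i$, the $\rho$-neighbourhood is covered by the following pieces: edge bands of area at most $2\ell_i\sin\rho$, each a piece of a spherical zone $\{|\text{latitude}|<\rho\}$ spanning longitude $\ell_i$, together with vertex caps. The caps get absorbed because the angular sectors at vertex $i$ not covered by the adjacent bands correspond to exterior angles. A cruder alternative is the Cauchy--Crofton route: the area of $T_r$ is bounded by integrating, over great circles, the number of intersections of $\mathcal{C}$ with them. We would actually prefer this, since the measure of great circles passing within distance $r$ of a short arc of length $ds$ is proportional to $\sin r\,ds$, by integral geometry. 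Then $\area(T_r)\le 2L\sin r$ follows by taking the union bound over arcs.

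Granting the estimate, for $L\le2\pi$ we get $2L\sin r\le 4\pi$ throughout. Therefore
\begin{equation*}
  4\pi\mathcal{J}\ge 4\pi^2-2L\int_0^\pi\sin r\,dr = 4\pi^2-4L,
\end{equation*}
which is exactly \eqref{eq:short}. For $L>2\pi$ the bound is weaker than the trivial one, but it still holds since $\area(T_r)\le 2L\sin r$ is valid without the minimum.

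For a circle of spherical radius $a$, with $L=2\pi\sin a$ and $a\le\pi/2$, the tubes are zones. For $r\le a$ the zone between radii $a-r$ and $a+r$ (truncated at $\pi$) has area $2\pi(\cos(a-r)-\cos(a+r))=2L\sin r$. For $r>a$ the two computations compensate: the cap of radius $\min(a+r,\pi)$ versus the zone. A direct computation of $\int\dist\,dA$ for the circle confirms equality: the integral equals $2\pi\bigl(\int_0^a (a-\theta)\sin\theta\,d\theta+\int_a^\pi(\theta-a)\sin\theta\,d\theta\bigr)$, which reduces to $2\pi(\pi-2\sin a)$ after routine integration.

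For uniqueness among simple curves, equality forces $\area(T_r)=2L\sin r$ for a.e.\ $r$, and hence for all $r$ by continuity. The small-$r$ behaviour of tube area then forces zero geodesic curvature in the Steiner expansion with the cap term, or in the Crofton argument forces no great circle to be double-counted. We expect this rigidity step to be the main obstacle: showing that equality for all $r<\pi$ forces $\mathcal{C}$ to be a circle. We would try the following first. Equality up to $r=\pi/2$ means the normal exponential map $F$ is injective on $|t|<\pi/2$ off a null set. At $r$ near the inradius of each complementary disk, the tube must exhaust that disk exactly with no overlap, so each side is a set of constant width at its focal point. That is, all points of $\mathcal{C}$ are equidistant (distance $a$ and $\pi-a$) from two antipodal points, which makes $\mathcal{C}$ a circle.
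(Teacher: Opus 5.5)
Your key estimate $\area(T_r)\le 2L\sin r$ on all of $[0,\pi]$ is false, and everything after it depends on it. It fails trivially as $r\to\pi$, since the right side tends to $0$ while $T_r$ fills $\Sph$. It also fails for the candidate minimizers. For the circle of radius $a<\pi/2$ (so $L=2\pi\sin a$) and $a<r\le\pi-a$, the tube is the cap of radius $a+r$, with area $2\pi(1-\cos(a+r))=2L\sin r+2\pi(1-\cos(r-a))>2L\sin r$. That is the mismatch you met in your verification and covered with ``the two computations compensate''.

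The arithmetic exposes it too. $\int_0^\pi(4\pi-2L\sin r)\,dr=4\pi^2-4L$ gives $\mathcal{J}\ge\pi-L/\pi$, which is twice the right-hand side of \eqref{eq:short} (that inequality reads $4\pi\mathcal{J}\ge2\pi^2-2L$). It is violated by the great circle, where $\mathcal{J}=\pi/2-1<\pi-2$.

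Neither justification you sketch can repair this. A union bound over $N$ short arcs picks up an end-cap term $2\pi(1-\cos r)$ per arc. For a polygon, the vertex wedges on the convex side contribute $(1-\cos r)$ times the turning, and this is compensated by overlap on the other side only until that side's disk is exhausted. This is why the paper's own tube bound, Proposition~\ref{prop:tube}, is restricted to $L\ge2\pi$ and $r\le\vartheta$ and needs the collar and Gauss--Bonnet machinery. Your rigidity step then starts from the equality $\area(T_r)=2L\sin r$ for all $r$, which not even the circles satisfy.

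What does hold, for almost every $r$, is a bound on the set of centers of circles that \emph{cross} the curve. If $\dist(x,\mathcal{C})<r<\pi-\dist(-x,\mathcal{C})$, then $r$ lies strictly between the minimum and maximum of $\psi_x(s)=\dist(x,\gamma(s))$. So $\partial B(x,r)$ meets $\mathcal{C}$ at least once on each of the two arcs joining a minimum point to a maximum point. The Crofton formula $\int_{\Sph}\#(\mathcal{C}\cap\partial B(x,r))\,dA(x)=4L\sin r$ then bounds the area of this set of $x$ by $2L\sin r$. Since $\dist(x,\mathcal{C})+\dist(-x,\mathcal{C})\le\pi$, that area equals $\area(T_r)+\area(T_{\pi-r})-4\pi$ up to a null level set.

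So the correct replacement for your estimate is $\area(T_r)+\area(T_{\pi-r})\le4\pi+2L\sin r$, which is an equality for circles at every $r$; integrating it over $[0,\pi]$ gives exactly \eqref{eq:short}. This is the paper's proof before the integration in $r$. There, for each $x$, the integral over $r$ of $\#\{s:\psi_x(s)=r\}$ is the total variation of $\psi_x$, bounded below by twice its oscillation $\pi-\dist(x,\mathcal{C})-\dist(-x,\mathcal{C})$.

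The same bookkeeping gives uniqueness in place of your constant-width idea. Equality forces $\#(\mathcal{C}\cap\partial B(x,r))\le2$ for almost every $(x,r)$. A simple curve not contained in a circle has four non-coplanar points $y_1,\dots,y_4$ in cyclic order. A plane strictly separating the segments $[y_1,y_3]$ and $[y_2,y_4]$ then gives an open set of $(x,r)$ whose circles meet $\mathcal{C}$ at least four times, a contradiction. Finally, a Jordan curve contained in a circle is that circle.
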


Proposition~\ref{prop:short} is proved in Section~\ref{sec:short}.
Only $L>2\pi$ remains; we work throughout with $L\ge2\pi$, keeping the endpoint
because it is the first of the lengths $L_{n}$ introduced below. Attached to such
a length is the radius
$\vartheta=\vartheta(L)\in(0,\pi/2]$ determined by
\begin{equation}\label{eq:vartheta}
  \sin\vartheta=\frac{2\pi}{L},\qquad\text{equivalently}\qquad
  L\sin\vartheta=2\pi .
\end{equation}
Corollary~\ref{cor:cover} will show that $\vartheta$ is the smallest radius at
which a tube about a curve of length $L$ can cover the sphere.
The quantity to be minimized will be bounded below by
\begin{equation}\label{eq:j}
  j(L)=\vartheta-\tan\frac{\vartheta}{2}
  =\arcsin\frac{2\pi}{L}-\frac{1-\sqrt{1-(2\pi/L)^{2}}}{2\pi/L} .
\end{equation}
Here $\vartheta$ is strictly decreasing in $L$, while
$\theta\mapsto\theta-\tan\frac{\theta}{2}$ is strictly increasing on $[0,\pi/2]$,
so $j$ is strictly decreasing on $[2\pi,\infty)$. Put
\[
  \vartheta_{n}=\frac{\pi}{2n},\qquad L_{n}=\frac{2\pi}{\sin\vartheta_{n}}
  \quad(n=1,2,3,\dots),
\]
so that $\vartheta_{n}=\vartheta(L_{n})$. Against \eqref{eq:short} the bound $j$
is the stronger one, the difference being
$\vartheta+\cot\vartheta-\pi/2\ge0$.

\begin{theorem}\label{thm:main}
Let $n$ be a positive integer. Every curve $\mathcal{C}\subset\Sph$ of length
$L\le L_{n}$ satisfies
\begin{equation}\label{eq:main}
  \mathcal{J}(\mathcal{C})\ \ge\ j(L_{n})=\frac{\pi}{2n}-\tan\frac{\pi}{4n},
\end{equation}
and every curve attaining equality has length exactly $L_{n}$. Equality holds
for the sphere-filling ropes $\bta{n}{k}$ of Section~\ref{sec:curves}, where
$0\le k<n$ and $\gcd(k,n)=1$. They have length $L_{n}$, so in particular
$J(L_{n})=j(L_{n})$, and at Kimberling's length $L_{3}=4\pi$ the minimum
is $\pi/6+\sqrt{3}-2$, attained by the six-arc curve $\bta{3}{1}$ of
\eqref{eq:six-arc} and by its mirror image $\bta{3}{2}$.
\end{theorem}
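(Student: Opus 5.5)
The plan is to use the layer-cake formula
\[
\mathcal{J}(\mathcal{C})=\frac{1}{4\pi}\int_{0}^{\pi}\bigl(4\pi-A(t)\bigr)\,dt ,
\]
where $A(t)=\area\{x:\dist(x,\mathcal{C})<t\}$ is the area of the tube of radius $t$. Everything then reduces to bounding $A(t)$ from above, pointwise in $t$, by a function of $L$ that is sharp for the ropes.

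\emph{Step 1: a pointwise tube bound.} The curve $\mathcal{C}$ is simple, so by the Jordan curve theorem it bounds two disks $D_{1},D_{2}$ with $\area D_{1}+\area D_{2}=4\pi$. The tube is the union of the two inward collars $\{x\in D_{i}:\dist(x,\mathcal{C})<t\}$. For a disk of perimeter $L$ and area $a$, I would bound the collar area by the spherical parallel-set inequality: the inner parallel curves at depth $s$ have length at most $L\cos s-(2\pi-a)\sin s$, a Gauss--Bonnet/Steiner estimate valid in the weak sense for rectifiable boundaries. The collar area is then at most the integral of this, truncated where it reaches zero, and also at most $a$. Summing over the two disks, the $a$-terms cancel to leading order. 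For $t\le\vartheta$ this should give
\[
A(t)\le \min\bigl(4\pi,\ 2L\sin t\bigr),
\]
and the truncation at $4\pi$ is where $\vartheta$ enters. This is presumably the content of Corollary~\ref{cor:cover}. The finer bound that actually yields $j$ must use that each collar is capped by its own area $a_{i}$, with $a_{1}+a_{2}=4\pi$. Optimizing over the split gives $a_{1}=a_{2}=2\pi$, and each collar is then at most $\min\bigl(2\pi,\ L\sin t-2\pi(1-\cos t)\bigr)$ or similar. At $L\sin\vartheta=2\pi$ these cap exactly at depth $\vartheta$, giving $1-\cos\vartheta$ corrections that integrate to the $\tan(\vartheta/2)$ term.

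\emph{Step 2: integrate and compare.} Substituting the bound into the layer-cake formula and computing with $\sin\vartheta=2\pi/L$ should yield exactly $\mathcal{J}\ge\vartheta-\tan(\vartheta/2)=j(L)$, which is routine trigonometry. Since $j$ is strictly decreasing, for $L\le L_{n}$ we get $\mathcal{J}\ge j(L)\ge j(L_{n})$, with the second inequality strict unless $L=L_{n}$. This gives both \eqref{eq:main} and the statement that equality forces length exactly $L_{n}$. The main obstacle I expect is Step 1 at nonsmooth rectifiable curves: justifying the inner-parallel length inequality past the cut locus and focal points, where parallels stop being curves. I would first prove it for polygonal curves by exploiting that the distance function is semiconcave, and then pass to limits using lower semicontinuity of $\mathcal{J}$ under Fréchet approximation. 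Minimization over the split $a_{1}+a_{2}=4\pi$ also needs care, via convexity of the truncated profile in $a$.

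\emph{Step 3: attainment.} For the ropes $\bta{n}{k}$ of Section~\ref{sec:curves}, whose construction I take as given, I would verify three things. First, the length is $L_{n}$. Second, the curve bisects the sphere. Third, its tube is embedded up to radius $\vartheta_{n}$, i.e.\ the thickness is $\sin\vartheta_{n}$, with every point of the sphere within $\vartheta_{n}$ of the curve. Then the normal exponential map from each side is injective on depths $[0,\vartheta_{n})$, so every inequality in Step 1 is an equality and $\mathcal{J}=j(L_{n})$. The restriction $\vartheta_{n}=\pi/(2n)$ is exactly what makes the $2n$ strips of width $\vartheta_{n}$ tile. At $n=3$ this gives $\pi/6-\tan(\pi/12)=\pi/6+\sqrt3-2$, attained by the six-arc curve $\bta{3}{1}$ and its mirror $\bta{3}{2}$.
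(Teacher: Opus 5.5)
Your outline follows the paper's architecture: layer cake, the two Jordan disks, a Gauss--Bonnet inner-parallel estimate per collar, monotonicity of $j$, and the ropes via their thickness as in Lemma~\ref{lem:tube-n}. But the decisive tube bound is not established, and your account of it is partly wrong.

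For each disk put $F(t)=L\sin t+(a-2\pi)(1-\cos t)$. The $a$-terms cancel exactly, so $F_1+F_2=2L\sin t$ for \emph{every} split $a_1+a_2=4\pi$, and there is nothing to optimize. Moreover $A(t)\le 2L\sin t$ on $[0,\vartheta]$ already gives $j(L)$ exactly, because $\int_0^\vartheta(1-\sin t/\sin\vartheta)\,dt=\vartheta-(1-\cos\vartheta)/\sin\vartheta=\vartheta-\tan(\vartheta/2)$. Your ``finer'' per-collar bound $L\sin t-2\pi(1-\cos t)$ is false. Each side of $\bta{n}{k}$ has area $2\pi$ and collar area exactly $L_n\sin t$ for $t\le\vartheta_n$, so that bound would give $\mathcal{J}>j(L)$ for the ropes, contradicting your own Step~3.

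Two steps that are actually needed are missing.
\begin{enumerate}
\item Hartman's estimate is the differential inequality $df'\le(a-2\pi-f)\,d\theta$ on $(0,r)$, with $f'$ allowed to jump down at the cut locus. It is not a pointwise length bound. Getting $f\le F$ from it needs a comparison argument; the paper uses the Wronskian $w'\sin\theta-w\cos\theta$ of $w=f-F$. Your bound $f'\le F'$ follows only by such an argument, and only on $[0,\pi/2]$, namely from $F-f=\int_{[0,\theta]}\sin(\theta-t)\,d\mu(t)$ with $\mu\ge0$.
\item For the smaller disk the comparison covers only $[0,r_1]$, and beyond $r_1$ you must show $a_1\le F_1(t)$ up to $\vartheta$. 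Truncating where the length bound vanishes gives $F_1(s^*)$, which for $s^*<\vartheta$ exceeds $F_1(t)$ and overshoots $2L\sin t$. The cap $f_1\le a_1$ rescues this only because $F_1$ is concave on $[0,\pi/2]$ and $F_1(\vartheta)=a_1+(2\pi-a_1)\cos\vartheta\ge a_1$. That is the one place where $L\sin\vartheta=2\pi$ enters, and your ``this should give'' skips exactly this.
\end{enumerate}

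Second, the theorem covers every $L\le L_n$, including $L<2\pi$, where $\vartheta$ and $j(L)$ are undefined, so your Step~2 says nothing there. The paper uses a separate integral-geometric bound, Proposition~\ref{prop:short}. Integrating over $x$ the total variation of $s\mapsto\dist(x,\gamma(s))$ gives $\mathcal{J}\ge\pi/2-L/(2\pi)>\pi/2-1=j(2\pi)\ge j(L_n)$. This strict inequality is also what the claim ``equality forces $L=L_n$'' needs for short curves.

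In your approximation step the semicontinuity runs the wrong way. To transfer $\mathcal{J}(\mathcal{P}_j)\ge j(\length\mathcal{P}_j)$ to the limit you need $\mathcal{J}(\mathcal{C})\ge\limsup_j\mathcal{J}(\mathcal{P}_j)$. That holds, but because $\mathcal{J}$ is $1$-Lipschitz for the Hausdorff distance, not by lower semicontinuity. More seriously, inscribed polygons need not be simple, and without simplicity there are no two complementary disks to run the collar argument on. The paper gets simple inscribed polygons from Boedihardjo--Geng and rounds their corners to reach Hartman's $C^2$ setting, rather than proving a parallel inequality for polygons. With such approximants, passing to the limit directly in $\mathcal{J}\ge j(L)$, as you propose, is legitimate. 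For this theorem it is lighter than transferring the collar bound itself, which the paper does because it reuses that bound in Proposition~\ref{prop:equality}.
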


This answers Kimberling's case $L=4\pi$; for other $L>2\pi$ see
Proposition~\ref{prop:other}.

The lower bound rests on the tube estimate
\begin{equation}\label{eq:tube-intro}
  \area(T_{\theta}(\mathcal{C}))\ \le\ 2L\sin\theta
  \qquad(L\ge2\pi,\quad 0\le\theta\le\vartheta),
\end{equation}
where $T_{\theta}(\mathcal{C})$ is the set of points within $\theta$ of
$\mathcal{C}$; at $\theta=\vartheta$ the bound is $4\pi$. For curves of thickness
at least $\sin\theta$ in the sense of Gonzalez and Maddocks
\cite{GM99} (recalled in Section~\ref{sec:curves}), the $\theta$-tube has area
exactly $2L\sin\theta$ \cite[Prop.~1]{GvdM2011b}: that is the area of a
$\theta$-tube which nowhere overlaps itself, and \eqref{eq:tube-intro} says
overlapping can only cost area. A general rectifiable curve has no thickness
and no normals to unroll along, so instead we use the two collars that
$T_{\theta}$ cuts from the complementary disks of $\mathcal{C}$. On a disk $\Omega$ with $C^{2}$ boundary, the inner-parallel
inequality \cite{Hartman64} makes the collar area $f$ satisfy
$f''+f\le\area(\Omega)-2\pi$, and a Wronskian comparison yields the collar
bound (Lemma~\ref{lem:collar}). The disk of area at least $2\pi$ obeys the bound out
to depth $\pi/2$, and the two bounds combine because the complementary areas sum
to $4\pi$.

Section~\ref{sec:bound} develops the collar estimate and draws from it three
conclusions: a covering bound (Corollary~\ref{cor:cover}), the tube estimate
\eqref{eq:tube-intro}, and the inequality $\mathcal{J}\ge j(L)$
(Corollary~\ref{cor:bound}). Section~\ref{sec:equality} analyzes equality: a curve
attaining $j(L)$ is sphere-filling and bisects $\Sph$ into two disks of area
$2\pi$ and inradius $\vartheta$ (Proposition~\ref{prop:equality}), a minimizer of
length at most $L_{n}$ has length exactly $L_{n}$
(Corollary~\ref{cor:exact}), and for $n=1$ it is a great circle
(Corollary~\ref{cor:greatcircle}). Section~\ref{sec:curves} describes the
curves $\bta{n}{k}$, completes the proof of Theorem~\ref{thm:main}, writes down
the six-arc minimizer at $L=4\pi$ and classifies the minimizers of thickness at
least $\sin\vartheta$ (Proposition~\ref{prop:classify}). Section~\ref{sec:other}
brackets $J$ at the remaining lengths.

\section{Short curves}\label{sec:short}

\begin{proof}[Proof of Proposition~\ref{prop:short}]
Parametrize $\mathcal{C}$ by arclength as $\gamma\colon\R/L\mathbb{Z}\to\Sph$, fix
$x\in\Sph$ and put $\psi_{x}(s)=\dist(x,\gamma(s))$, a $1$-Lipschitz function.
Its minimum is $\dist(x,\mathcal{C})$ and its maximum is
$\pi-\dist(-x,\mathcal{C})$, because $\dist(-x,\,\cdot\,)=\pi-\dist(x,\,\cdot\,)$
turns the maximum of the one into the minimum of the other. A continuous
function on a circle has total variation at least twice its oscillation, so
\begin{equation}\label{eq:tv}
  \int_{0}^{L}\lvert\psi_{x}'(s)\rvert\,ds\ \ge\
  2\bigl(\pi-\dist(x,\mathcal{C})-\dist(-x,\mathcal{C})\bigr).
\end{equation}
At every $s$ where $\gamma'(s)$ exists and $x\ne\pm\gamma(s)$ the chain rule
applies to $\psi_{x}(s)=\arccos\langle x,\gamma(s)\rangle$ and gives
$\lvert\psi_{x}'(s)\rvert=\lvert\cos\varphi\rvert$, where $\varphi$ is the azimuth
of $x$ in geodesic polar coordinates centered at $\gamma(s)$, measured from
$\gamma'(s)$. Both exceptional sets are null, the second because $\gamma'$ would
vanish at a density point of $\gamma^{-1}(\{\pm x\})$. Integrating,
\[
  \int_{\Sph}\lvert\cos\varphi\rvert\,dA(x)
  =\int_{0}^{\pi}\!\!\int_{0}^{2\pi}\lvert\cos\varphi\rvert\,d\varphi\,\sin\theta\,d\theta
  =8 ,
\]
so Fubini gives $\int_{\Sph}\int_{0}^{L}\lvert\psi_{x}'(s)\rvert\,ds\,dA(x)=8L$. The
antipodal map preserves $dA$, so each of the two distance integrals equals
$4\pi\mathcal{J}(\mathcal{C})$, and integrating \eqref{eq:tv} over $\Sph$ gives
$8L\ge8\pi^{2}-16\pi\mathcal{J}(\mathcal{C})$, which is \eqref{eq:short}.

Let now $L\le2\pi$ and let $\mathcal{C}$ be the circle of geodesic radius
$\theta_{0}=\arcsin(L/2\pi)\le\pi/2$ about a fixed center. A point at geodesic
distance $\theta$ from the center is at distance $\lvert\theta-\theta_{0}\rvert$
from $\mathcal{C}$, so
\[
  4\pi \mathcal{J}(\mathcal{C})
  =2\pi\int_{0}^{\pi}\lvert\theta-\theta_{0}\rvert\sin\theta\,d\theta
  =2\pi(\pi-2\sin\theta_{0}),
\]
that is $\mathcal{J}(\mathcal{C})=\pi/2-\sin\theta_{0}=\pi/2-L/(2\pi)$.

Finally, let $\mathcal{C}$ be a curve of length $L\le2\pi$ with
$\mathcal{J}(\mathcal{C})=\pi/2-L/(2\pi)$. Every step above except \eqref{eq:tv}
was an identity, so \eqref{eq:tv} is an equality for almost every $x$. For such
an $x$, each of the two arcs cut from the parameter circle by a minimum and a
maximum point of $\psi_{x}$ carries variation exactly equal to the oscillation,
so $\psi_{x}$ is monotone on both. A level strictly between the extremes is
therefore attained on at most one subinterval of each arc, and only countably
many levels are attained on a nondegenerate one. So, writing
$B(x,r)=\{\dist(\,\cdot\,,x)<r\}$ for the open geodesic ball, for almost every
$x$ and almost every $r$ the circle $\partial B(x,r)$ meets $\mathcal{C}$ in at
most two points.

Suppose $\mathcal{C}$ were not contained in a circle. Any three of its points
determine a circle, which some fourth point of $\mathcal{C}$ misses, so
$\mathcal{C}$ has four points that are not concyclic, hence not coplanar.
Label them $y_{1},y_{2},y_{3},y_{4}$ in the cyclic order in which they occur
along $\mathcal{C}$. Two line segments of $\R^{3}$ that meet have coplanar
endpoints, so $[y_{1},y_{3}]$ and $[y_{2},y_{4}]$ are disjoint compact convex
sets and some plane separates them strictly. As the $y_{i}$ lie on $\Sph$, that
plane can be written $\{\langle\,\cdot\,,x\rangle=\cos r\}$ with $x\in\Sph$,
$r\in(0,\pi)$ and $y_{1},y_{3}$ on the side of $x$; then
$y_{1},y_{3}\in B(x,r)$ while $y_{2},y_{4}\notin\overline{B(x,r)}$. Each of the
four arcs of $\mathcal{C}$ between consecutive $y_{i}$ joins a point of
$B(x,r)$ to a point outside $\overline{B(x,r)}$, hence meets
$\partial B(x,r)$; as $\mathcal{C}$ is simple these four arcs meet only in the
$y_{i}$, which lie off $\partial B(x,r)$, so $\partial B(x,r)$ meets
$\mathcal{C}$ in at least four points. The four inclusions are strict, so the
same holds on a neighborhood of $(x,r)$, contradicting the previous paragraph.

So $\mathcal{C}$ lies on a circle, and equals it because a proper closed subset
of a circle is not homeomorphic to one. Its length is $L$, so it is a circle of
geodesic radius $\theta_{0}$.
\end{proof}

\section{The lower bound}\label{sec:bound}

\subsection{Inner parallels}\label{ssec:parallels}

Throughout this subsection $\mathcal{C}\subset\Sph$ is a curve of length $L$
and $\Omega$ is one of the two components of $\Sph\setminus\mathcal{C}$. By
Schoenflies these are disks, and $\partial\Omega=\mathcal{C}$. Put
$\alpha=\area(\Omega)$, let
\begin{equation}\label{eq:f}
  f(\theta)=\area\bigl(\Omega\cap\{\dist(\,\cdot\,,\mathcal{C})\le\theta\}\bigr)
\end{equation}
be the area of the inward collar of depth $\theta$, and let
$r=\max_{\overline{\Omega}}\dist(\,\cdot\,,\mathcal{C})$ be the inradius of
$\Omega$. Thus $f$ is nondecreasing, $f(0)=0$, and $f(\theta)=\alpha$ for
$\theta\ge r$. Since $\Omega$ is open and nonempty, $r>0$, so a point $x$
realizing the maximum lies in $\Omega$, and the open ball $B(x,r)$ misses
$\mathcal{C}$, hence lies in $\Omega$:
\begin{equation}\label{eq:ball}
  B(x,r)\subseteq\Omega,\qquad\text{whence}\qquad 2\pi(1-\cos r)\le\alpha .
\end{equation}
The other component being open and nonempty, $\alpha<4\pi$, so \eqref{eq:ball}
also gives $r<\pi$.

Since $\dist(\,\cdot\,,\mathcal{C})$ is $1$-Lipschitz with unit gradient almost
everywhere on $\Omega$, the coarea formula makes $f$ absolutely continuous, with
$f'(\theta)$ equal almost everywhere to the length of the inward parallel
$\Omega\cap\{\dist(\,\cdot\,,\mathcal{C})=\theta\}$. We state what we need as
Lemma~\ref{lem:parallel} (cf.\ \cite{Hartman64}) and prove it in
Appendix~\ref{app:hartman}.

\begin{lemma}[Hartman]\label{lem:parallel}
Suppose in addition that $\mathcal{C}$ is of class $C^{2}$. Then $f'$ has a
representative of bounded variation on $[0,r]$,
\begin{equation}\label{eq:fprime}
  f'(\theta)=L\cos\theta+(\alpha-2\pi)\sin\theta
  \qquad\text{for all sufficiently small }\theta>0,
\end{equation}
and the distributional derivative of $f'$ satisfies
\begin{equation}\label{eq:ode}
  df'\ \le\ \bigl(\alpha-2\pi-f(\theta)\bigr)\,d\theta
  \qquad\text{on }(0,r).
\end{equation}
\end{lemma}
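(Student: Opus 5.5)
We would prove Lemma~\ref{lem:parallel} by the classical cut-locus analysis of the distance function $d=\dist(\,\cdot\,,\mathcal{C})$ on $\overline{\Omega}$, in the spirit of Hartman. Parametrize $\mathcal{C}$ by arclength, $\gamma(s)$, with inward unit normal $\nu(s)$ and geodesic curvature $\kappa(s)$ (positive when $\mathcal{C}$ bends toward $\Omega$). The normal exponential map is
\[
E(s,t)=\cos t\,\gamma(s)+\sin t\,\nu(s),
\]
and its Jacobian is $J(s,t)=\cos t-\kappa(s)\sin t$. For each $s$ let $c(s)\in(0,r]$ be the cut distance, i.e.\ the supremum of $t$ such that the geodesic $t\mapsto E(s,t)$ minimizes $d$ on $[0,t]$. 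Standard facts (which hold because $\mathcal{C}$ is $C^{2}$) are the following. First, $c$ is positive and continuous; positivity uses the uniform interior ball condition given by $C^{2}$ regularity. Second, $J>0$ on $\{0\le t<c(s)\}$ away from focal points. Third, $E$ is a bijection from $\{0<t<c(s)\}$ onto $\Omega$ minus the cut locus, and the cut locus is null. Hence, for a.e.\ $\theta$,
\[
f'(\theta)=\int_{\{s:\,c(s)>\theta\}}\bigl(\cos\theta-\kappa(s)\sin\theta\bigr)\,ds .
\]
For $\theta<\min c$ this equals $L\cos\theta-\bigl(\int\kappa\bigr)\sin\theta$. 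Gauss--Bonnet on the disk $\Omega$ gives $\int\kappa\,ds+\alpha=2\pi$, so $\int\kappa=2\pi-\alpha$, and this yields \eqref{eq:fprime}.

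For \eqref{eq:ode}, we take this formula as the representative of $f'$ for every $\theta$. It has bounded variation because it is a difference of integrals of bounded functions over the sets $\{c>\theta\}$, which shrink monotonically. Its distributional derivative splits into two parts:
\[
df'=\int_{\{c>\theta\}}\bigl(-\sin\theta-\kappa\cos\theta\bigr)\,ds\,d\theta\;-\;\mu ,
\]
where $\mu\ge0$ is the pushforward under $c$ of the measure $J(s,c(s))\,ds$, which is nonnegative since $J\ge0$ up to the cut point. It therefore suffices to show that the absolutely continuous part is at most $\alpha-2\pi-f(\theta)$.

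The key identity comes from applying Gauss--Bonnet to the region $\Omega_{\theta}=\{x\in\Omega: d(x)>\theta\}$. Along the parallel, the integrand $-\sin\theta-\kappa\cos\theta$ equals $-J(s,\theta)\,\kappa_{\theta}(s)$, i.e.\ minus the geodesic curvature of the parallel integrated against its length element. This uses $\partial_t J=-\sin t-\kappa\cos t$ together with the Riccati relation $\kappa_t=-\partial_tJ/J$. Gauss--Bonnet would then give
\[
\int\kappa_{\theta}\,d\ell=2\pi\chi(\Omega_{\theta})-\area(\Omega_{\theta})+(\text{turning at corners}),
\]
where $\area(\Omega_{\theta})=\alpha-f(\theta)$.

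This Gauss--Bonnet step is the main obstacle. The set $\Omega_{\theta}$ need not be a disk, and its boundary has concave corners where it meets the cut locus. The corners contribute exterior angles of the wrong sign relative to a smooth boundary, and components that split off increase $\chi$. We must show that both effects push the total curvature in the direction giving
\[
-\int\kappa_{\theta}\,d\ell\le\alpha-2\pi-f(\theta),
\]
i.e.\ that the corner angles and the topology satisfy $2\pi\chi(\Omega_{\theta})+(\text{corners})\ge2\pi$ in the relevant signed sense. Our first attempt would avoid smooth Gauss--Bonnet on $\Omega_{\theta}$ altogether and use the first variation of area instead. Write $g(\theta)=\area(\Omega_{\theta})$, so that $g=\alpha-f$ and $g'=-f'$. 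Then $f''=-g''$, and comparing with the rotationally symmetric model (Hartman's argument via the Bishop--Gromov-type comparison on each connected component, each of which has $\chi\le1$ and convex corners as seen from inside $\Omega_{\theta}$) yields $f''+f\le\alpha-2\pi$ in the sense of measures. Summing over the components, each contributes a Euler characteristic term of at most $1$ but a nonnegative corner term. We would verify the sign bookkeeping carefully on a model example with a single cut point and a dumbbell-shaped region.
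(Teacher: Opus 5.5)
There is a genuine gap: after splitting off $\mu$ you reduce \eqref{eq:ode} to an inequality that is false, and the term you discard is the one that makes \eqref{eq:ode} true.

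\textbf{What is fine.} Your first half is sound and agrees with the paper. This covers the cut-distance formula for $f'$ and its bounded variation. It also covers \eqref{eq:fprime}, obtained from $J(s,\theta)=\cos\theta-\kappa\sin\theta$ and Gauss--Bonnet before the first cut value; the paper does the same computation in Fermi coordinates. The splitting $df'=a(\theta)\,d\theta-\mu$, with $a(\theta)=\int_{\{c>\theta\}}(-\sin\theta-\kappa\cos\theta)\,ds$ and $\mu=c_{*}\bigl(J(s,c(s))\,ds\bigr)\ge0$, is also correct.

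\textbf{The reduced inequality is false.} You drop $\mu$ and set out to prove $a(\theta)\le\alpha-2\pi-f(\theta)$. Let $\Omega$ be a small geodesic square with its corners rounded off within distance $\epsilon$ of the vertices. For $\theta$ between a fixed multiple of $\epsilon$ and $r$, the normals from the rounded corners have already hit the cut locus. So $\{c>\theta\}$ consists of points of the geodesic sides, where $\kappa=0$, and $a(\theta)=-\sin\theta\,\lvert\{c>\theta\}\rvert\ge-L\sin\theta$, which is close to $0$. On the other hand $\alpha-2\pi-f(\theta)\le\alpha-2\pi$, which is close to $-2\pi$. In that range $f'$ decreases only because the parallel, by then a square with sharp corners, loses length into the diagonals of the cut locus. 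Your $\mu$ has an absolutely continuous part of density about $8>2\pi$ there.

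\textbf{Gauss--Bonnet cannot rescue it.} Put $\Omega_\theta=\{x\in\Omega:\dist(x,\mathcal{C})>\theta\}$ and let $\epsilon_i$ be the exterior angles at its corners. The smooth part of $\partial\Omega_\theta$ contributes $\int\kappa_\theta\,d\ell=-a(\theta)$. So Gauss--Bonnet reads $-a(\theta)+\sum_i\epsilon_i+\alpha-f(\theta)=2\pi\chi(\Omega_\theta)$, and your target becomes $\sum_i\epsilon_i\le2\pi\bigl(\chi(\Omega_\theta)-1\bigr)$.

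This fails for two reasons:
\begin{itemize}
\item The corners at non-focal cut points are convex, so $\epsilon_i>0$: near such a point $\Omega_\theta$ is the intersection of two regions bounded by transversal smooth arcs.
\item Each component of $\Omega_\theta$ is simply connected, since every point of $\Omega\setminus\Omega_\theta$ is joined to $\mathcal{C}$ by its minimizing arc, so there are no holes. Hence $\chi(\Omega_\theta)$ is just the number of components.
\end{itemize}
One component with one corner already violates the target. Your fallback, a Bishop--Gromov-type comparison ``as in Hartman'' with $\chi\le1$ and a corner term of favourable sign, presupposes the result and also reverses these signs.

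\textbf{The missing idea.} Pair the corners with $\mu$ instead of discarding it. Where two arcs of the parallel meet with exterior angle $\epsilon$, they lose length at the rate $2\tan(\epsilon/2)\ge\epsilon$; this is a first-order computation, as in the plane. So the absolutely continuous density of $\mu$ dominates $\sum_i\epsilon_i$. Gauss--Bonnet with $\chi(\Omega_\theta)\ge1$ then gives $a(\theta)\,d\theta-\mu\le(\alpha-2\pi-f(\theta))\,d\theta$.

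Making this rigorous for a merely $C^{2}$ curve is exactly the work done in Hartman's paper: the cut locus can have infinitely many branches, and the parallels need not be piecewise smooth. The paper does not redo that work. It moves $\Omega$ into Hartman's setting by stereographic projection and identifies $f$ and $f'$ with his area and parallel-length functions. It then quotes his inequality \cite[(6.4), Cor.~6.1]{Hartman64}, using Gauss--Bonnet to turn $\int\kappa\,ds+\int K\,dA$ into $2\pi-\alpha+f$.
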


The parallel length $f'$ can drop abruptly, at depths where the collar reaches
the cut locus of $\mathcal{C}$ in $\Omega$ and part of the parallel disappears;
it never jumps upward, which is what makes the one-sided \eqref{eq:ode} enough.

Comparison with the solution of the corresponding equation turns \eqref{eq:ode}
into the bound we shall use. Write
\begin{equation}\label{eq:F}
  F(\theta)=L\sin\theta+(\alpha-2\pi)(1-\cos\theta),
\end{equation}
the solution of $F''+F=\alpha-2\pi$ with $F(0)=0$ and $F'(0)=L$.

\begin{lemma}[collar bound]\label{lem:collar}
Let $\Omega$, $\alpha$, $r$, $f$, $F$ be as above. Then:
\begin{enumerate}
\item[(a)] $f\le F$ on $[0,r]$, and on $[0,\pi/2]$ as well if $\alpha\ge2\pi$.
\item[(b)] Let $\alpha\ge2\pi$. If $r\le\pi/2$ then $L\sin r\ge2\pi$, and if
  $L\ge2\pi$ then $r\ge\vartheta$.
\end{enumerate}
\end{lemma}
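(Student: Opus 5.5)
The proof has three parts: (a) for $C^{2}$ curves by a Wronskian comparison, (b) from (a) by evaluating at $\theta=r$, and (a) for general rectifiable curves by approximation.

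\textbf{Smooth case.} Suppose first that $\mathcal{C}$ is $C^{2}$, and set $g=F-f$ on $[0,r]$. By \eqref{eq:fprime}, $f'=F'$ for small $\theta>0$, so $g\equiv0$ near $0$. Since $F''+F=\alpha-2\pi$, \eqref{eq:ode} gives the distributional inequality $dg'+g\,d\theta\ge0$ on $(0,r)$. Here $g'$ is the BV representative from Lemma~\ref{lem:parallel}. Form the Wronskian with $\sin\theta$:
\[
  W(\theta)=g'(\theta)\sin\theta-g(\theta)\cos\theta .
\]
The BV product rule gives $dW=\sin\theta\,(dg'+g\,d\theta)\ge0$ on $(0,r)$; this uses $r<\pi$, so $\sin\theta>0$ there. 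We have $W=0$ near $0$, so $W\ge0$ on $(0,r)$. Then $(g/\sin\theta)'=W/\sin^{2}\theta\ge0$, so $g/\sin\theta$ is nondecreasing. It vanishes near $0$, hence $g\ge0$, that is $f\le F$ on $[0,r]$.

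\textbf{Extension to $[0,\pi/2]$.} Let $\alpha\ge2\pi$. Then $F'=L\cos\theta+(\alpha-2\pi)\sin\theta\ge0$ on $[0,\pi/2]$. Hence for $r\le\theta\le\pi/2$ we get $F(\theta)\ge F(r)\ge f(r)=\alpha=f(\theta)$.

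\textbf{Part (b).} Let $\alpha\ge2\pi$ and $r\le\pi/2$. Evaluating (a) at $\theta=r$ gives $\alpha\le L\sin r+(\alpha-2\pi)(1-\cos r)$. Rearranged, this reads
\[
  L\sin r\ \ge\ \alpha\cos r+2\pi(1-\cos r)\ \ge\ 2\pi\cos r+2\pi(1-\cos r)=2\pi .
\]
Now let $L\ge2\pi$, so $\vartheta\le\pi/2$, and suppose $r<\vartheta$. Then $r<\pi/2$, so the first claim applies and gives $L\sin r\ge2\pi$. But $\sin$ is increasing on $[0,\pi/2]$, so $L\sin r<L\sin\vartheta=2\pi$, a contradiction. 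Hence $r\ge\vartheta$. Once (a) holds for general curves, (b) follows for them by the same computation.

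\textbf{General rectifiable curves.} This is the main obstacle, since Lemma~\ref{lem:parallel} needs $C^{2}$. I would approximate $\mathcal{C}$ by $C^{2}$ Jordan curves $\mathcal{C}_{k}$: for example, smooth inscribed polygons with small, rounded corners. They should converge to $\mathcal{C}$ in Hausdorff distance, with $\length(\mathcal{C}_{k})\to L$. The complementary disk $\Omega_{k}$ chosen on the side of $\Omega$ should satisfy $\area(\Omega_{k}\,\triangle\,\Omega)\to0$, which holds since the Jordan domains converge.

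The distance functions then converge uniformly, say within $\varepsilon_{k}\to0$. This gives inclusions up to small sets:
\[
  \Omega\cap\{\dist(\cdot,\mathcal{C})\le\theta\}\subseteq\bigl(\Omega_{k}\cap\{\dist(\cdot,\mathcal{C}_{k})\le\theta+\varepsilon_{k}\}\bigr)\cup(\Omega\setminus\Omega_{k}).
\]
Hence $f(\theta)\le f_{k}(\theta+\varepsilon_{k})+o(1)\le F_{k}(\theta+\varepsilon_{k})+o(1)$, where the second step uses the smooth case whenever $\theta+\varepsilon_{k}$ lies in the admissible range.

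The inradii converge, $r_{k}\to r$. For $\alpha\ge2\pi$, the areas $\alpha_{k}$ can be slightly below $2\pi$. I would handle this by using the $[0,r_{k}]$ version of (a), together with the monotonicity argument of the extension step for $F_{k}$; that argument stays valid up to an error that is $o(1)$ as $\alpha_{k}-2\pi\to0$. Letting $k\to\infty$ and then using continuity of $F$ in $\theta$, $L$ and $\alpha$ gives $f\le F$ on the stated ranges.
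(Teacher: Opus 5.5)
Your smooth case (the Wronskian $g'\sin\theta-g\cos\theta$ for $g=F-f$), the extension to $[0,\pi/2]$, and part (b) are correct and coincide with the paper's argument up to the sign of $g$. The gap is in the passage to rectifiable curves, at the two points that Appendix~\ref{app:approx} handles.

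First, an inscribed geodesic polygon of a Jordan curve need not be simple, however fine the mesh. Where the curve folds back on itself at small scales, chords spanning two nearby folds can cross. Rounding the corners of a self-intersecting polygon does not produce a Jordan curve. You need a partition whose interpolant is simple; the paper takes one from \cite[Thm.~2.2]{BoedihardjoGeng2015}.

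Second, ``$\area(\Omega_{k}\triangle\Omega)\to0$, since the Jordan domains converge'' is asserted, not proved, and Hausdorff convergence by itself does not give it. Take the boundary of a thin strip along the equator that covers all of it but a short gap. It is Hausdorff-close to the equator, yet neither of its complementary disks is close to a hemisphere. What pins down the sides is a homotopy: $\mathcal{C}_{k}$ is homotopic to $\mathcal{C}$ through loops in the closed $\delta_{k}$-neighborhood of $\mathcal{C}$. To see this, slide each point of $\mathcal{C}$ along a short geodesic to its partner on the polygon, which is uniformly close as a parametrized curve. Then remove a point of $\Omega$ far from $\mathcal{C}$ to work in the plane. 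Both curves have the same winding number about every point at distance $>\delta_{k}$ from $\mathcal{C}$, which yields $\Omega\cap\{\dist(\cdot,\mathcal{C})>\delta_{k}\}\subseteq\Omega_{k}$.

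From this inclusion and its analogue for the other side, your estimates do follow. You get $\Omega_{k}\triangle\Omega\subseteq\{\dist(\cdot,\mathcal{C})\le\delta_{k}\}$, whose area tends to $\area(\mathcal{C})=0$. You also get $r_{k}\ge r-\delta_{k}$, which together with $f_{k}(\theta)\le F_{k}(\min\{\theta,r_{k}\})$ is all you need about inradii; the full convergence $r_{k}\to r$ is not required.

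Finally, the $o(1)$ bookkeeping for $\alpha_{k}<2\pi$ is unnecessary. Once $f\le F$ holds on $[0,r]$ for the rectifiable curve, your own extension step, applied to $F$ itself with $\alpha\ge2\pi$, gives it on $[0,\pi/2]$.
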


\begin{proof}
(a) Assume first that $\mathcal{C}$ is of class $C^{2}$. The general case is
deduced by approximation in Appendix~\ref{app:approx}. Put $w=f-F$. It is absolutely continuous,
$w(0)=0$, and by Lemma~\ref{lem:parallel} its derivative has a representative of
bounded variation, which we take right-continuous and denote by $w'$. Since
$F''=\alpha-2\pi-F$, inequality \eqref{eq:ode} gives
$dw'\le(\alpha-2\pi-f-F'')\,d\theta=(F-f)\,d\theta=-w\,d\theta$, that is
\begin{equation}\label{eq:dwprime}
  dw'+w\,d\theta\ \le\ 0\qquad\text{on }(0,r),
\end{equation}
which makes $w$ a subsolution of $y''+y=0$. It also starts at rest: by
\eqref{eq:fprime}, $w'$ vanishes on some interval $(0,\theta_{0})$ with
$\theta_{0}\in(0,r)$, and since $w(0)=0$ this gives $w\equiv0$ on
$[0,\theta_{0}]$.

Consider the Wronskian $W=w'\sin\theta-w\cos\theta$. Since
$d(w'\sin\theta)=\sin\theta\,dw'+w'\cos\theta\,d\theta$ and
$d(w\cos\theta)=(w'\cos\theta-w\sin\theta)\,d\theta$, the two terms in
$w'\cos\theta\,d\theta$ cancel and, as measures on $(0,r)$,
$dW=\sin\theta\,(dw'+w\,d\theta)\le0$ by \eqref{eq:dwprime} and
$\sin\theta>0$ on $(0,r)\subseteq(0,\pi)$. So $W$ is
nonincreasing on $(0,r)$. It vanishes on $(0,\theta_{0})$, whence $W\le0$
throughout. Now $w/\sin\theta$ is absolutely continuous on compact subintervals
of $(0,r)$, with derivative $W/\sin^{2}\theta\le0$ almost everywhere, and it
vanishes on $(0,\theta_{0})$, so it is nonpositive on $(0,r)$. Therefore
$w\le0$ on $(0,r)$, and on $[0,r]$ by continuity.

For the second half of (a), let $\alpha\ge2\pi$. Then
$F'(\theta)=L\cos\theta+(\alpha-2\pi)\sin\theta\ge0$ on $[0,\pi/2]$, so $F$ is
nondecreasing there. For $\theta\le\min\{r,\pi/2\}$ the first half applies
directly, while for $r\le\theta\le\pi/2$ it gives
$f(\theta)=\alpha=f(r)\le F(r)\le F(\theta)$.

(b) Taking $\theta=r$ in (a), where $f(r)=\alpha$, gives
$\alpha\le L\sin r+(\alpha-2\pi)(1-\cos r)$, that is
$2\pi+(\alpha-2\pi)\cos r\le L\sin r$. Let $\alpha\ge2\pi$. If $r\le\pi/2$ then
$\cos r\ge0$, so the left-hand side is at least $2\pi$ and $L\sin r\ge2\pi$. Let
now also $L\ge2\pi$. If $r>\pi/2$ then $r>\pi/2\ge\vartheta$; and if
$r\le\pi/2$ then $L\sin r\ge2\pi=L\sin\vartheta$ by \eqref{eq:vartheta}, so
$\sin r\ge\sin\vartheta$ and, $r$ and $\vartheta$ both lying in $[0,\pi/2]$
where $\sin$ increases, $r\ge\vartheta$.
\end{proof}

\subsection{The tube bound}\label{ssec:tube}

For a curve $\mathcal{C}\subset\Sph$, put
$T_{\theta}(\mathcal{C})=\{\dist(\,\cdot\,,\mathcal{C})\le\theta\}$ and
$A_{\mathcal{C}}(\theta)=\area(T_{\theta}(\mathcal{C}))$. Since
$\dist(\,\cdot\,,\mathcal{C})$ is continuous,
nonnegative and bounded by $\pi$, the layer-cake formula turns \eqref{eq:Jcal} into
\begin{equation}\label{eq:layercake}
  \mathcal{J}(\mathcal{C})=\frac{1}{4\pi}\int_{0}^{\pi}\bigl(4\pi-A_{\mathcal{C}}(\theta)\bigr)\,d\theta
      =\int_{0}^{\pi}\Bigl(1-\frac{A_{\mathcal{C}}(\theta)}{4\pi}\Bigr)d\theta .
\end{equation}
Minimizing $\mathcal{J}$ is therefore the same as making $A_{\mathcal{C}}$ as
large as possible at every radius; \eqref{eq:tube} below is the cap.

Let
$\Omega_{1},\Omega_{2}$ be the two components of $\Sph\setminus\mathcal{C}$,
\emph{labeled so that} $\alpha_{1}\le\alpha_{2}$, where
$\alpha_{i}=\area(\Omega_{i})$. Thus
\begin{equation}\label{eq:areas}
  \alpha_{1}+\alpha_{2}=4\pi,\qquad \alpha_{1}\le2\pi\le\alpha_{2} .
\end{equation}
Write $r_{i}$ for the inradius of $\Omega_{i}$, $f_{i}$ for its collar area
\eqref{eq:f} and $F_{i}$ for the function \eqref{eq:F} attached to it. By
\eqref{eq:areas} the two comparison functions add up to
\begin{equation}\label{eq:Fsum}
  F_{1}+F_{2}=2L\sin\theta .
\end{equation}
Since
$\Sph=\overline{\Omega_{1}}\cup\overline{\Omega_{2}}$ and a rectifiable curve is
a null set,
\begin{equation}\label{eq:split}
  A_{\mathcal{C}}=f_{1}+f_{2},
  \qquad
  R:=\max_{\Sph}\dist(\,\cdot\,,\mathcal{C})=\max\{r_{1},r_{2}\} .
\end{equation}
We call $R$ the \emph{covering radius} of $\mathcal{C}$. By \eqref{eq:split} and
the absolute continuity of the collars, $A_{\mathcal{C}}$ is continuous, so the
level sets of $\dist(\,\cdot\,,\mathcal{C})$ are null and open and closed tubes
have the same area; the results of \cite{GvdM2011b} used below are stated for
the open ones.

\begin{corollary}\label{cor:cover}
Every curve $\mathcal{C}\subset\Sph$ of length $L\ge2\pi$ has covering radius
$R\ge\vartheta$. Moreover, whatever the length, if $T_{\theta}(\mathcal{C})=\Sph$
for some $\theta\in(0,\pi/2]$ then $L\ge2\pi/\sin\theta$.
\end{corollary}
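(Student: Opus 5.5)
The first claim follows from Lemma~\ref{lem:collar}(b) applied to the larger complementary disk. Label the components so that \eqref{eq:areas} holds; then $\alpha_{2}\ge2\pi$. If $L\ge2\pi$, Lemma~\ref{lem:collar}(b) gives $r_{2}\ge\vartheta$. By \eqref{eq:split} we have $R=\max\{r_{1},r_{2}\}\ge r_{2}$, so $R\ge\vartheta$.

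For the second claim, suppose $T_{\theta}(\mathcal{C})=\Sph$ for some $\theta\in(0,\pi/2]$. Every point of $\Sph$ then lies within $\theta$ of $\mathcal{C}$, so both inradii satisfy $r_{i}\le\theta\le\pi/2$. In particular $r_{2}\le\pi/2$. Since $\alpha_{2}\ge2\pi$, the first part of Lemma~\ref{lem:collar}(b) gives $L\sin r_{2}\ge2\pi$. The sine is increasing on $[0,\pi/2]$ and $r_{2}\le\theta$, so $L\sin\theta\ge L\sin r_{2}\ge2\pi$, which is $L\ge2\pi/\sin\theta$.

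No step here should be a real obstacle, since all the analytic content is in Lemma~\ref{lem:collar}. Two points need checking. First, the larger disk really does have area at least $2\pi$, which is immediate from $\alpha_{1}+\alpha_{2}=4\pi$. Second, the statement places no hypothesis on the length in the second claim, and this is harmless because Lemma~\ref{lem:collar}(b), first part, only needs $\alpha\ge2\pi$ and $r\le\pi/2$. The same argument also covers the degenerate case: it forces $L\ge2\pi$ whenever the sphere is covered at radius at most $\pi/2$.
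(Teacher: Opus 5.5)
Your proof is correct and matches the paper's argument essentially step for step. In both, Lemma~\ref{lem:collar}(b) is applied to the component of area at least $2\pi$: its second part gives $R\ge r_{2}\ge\vartheta$, and its first part, together with $r_{2}\le R\le\theta\le\pi/2$ and the monotonicity of $\sin$ on $[0,\pi/2]$, gives $L\sin\theta\ge2\pi$.
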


\begin{proof}
By \eqref{eq:areas} the area of $\Omega_{2}$ is at least $2\pi$, so
Lemma~\ref{lem:collar}(b) gives $r_{2}\ge\vartheta$ when $L\ge2\pi$, and hence
$R\ge\vartheta$ by \eqref{eq:split}. Now $T_{\theta}(\mathcal{C})=\Sph$ says
exactly that $R\le\theta$, so $r_{2}\le\theta\le\pi/2$ and the same part of that
lemma gives $L\sin r_{2}\ge2\pi$. As $\sin$ increases on $[0,\pi/2]$,
$L\sin\theta\ge L\sin r_{2}\ge2\pi$.
\end{proof}

By Lemma~\ref{lem:tube-n} below the bound $L\ge2\pi/\sin\theta$ is attained at
every $\theta=\vartheta_{n}$.

\begin{proposition}\label{prop:tube}
Every curve $\mathcal{C}\subset\Sph$ of length $L\ge2\pi$ satisfies
\begin{equation}\label{eq:tube}
  A_{\mathcal{C}}(\theta)\ \le\ 2L\sin\theta,
  \qquad 0\le\theta\le\vartheta .
\end{equation}
\end{proposition}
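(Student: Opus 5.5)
The plan is to split the tube area along the two complementary disks, $A_{\mathcal{C}}=f_{1}+f_{2}$ by \eqref{eq:split}, and to bound each collar by its comparison function. Then \eqref{eq:Fsum} gives $F_{1}+F_{2}=2L\sin\theta$, which is exactly the claimed bound. So it suffices to show $f_{1}\le F_{1}$ and $f_{2}\le F_{2}$ on $[0,\vartheta]$.

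For the larger disk this is immediate. Since $\alpha_{2}\ge2\pi$, Lemma~\ref{lem:collar}(a) gives $f_{2}\le F_{2}$ on all of $[0,\pi/2]$, which contains $[0,\vartheta]$.

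The smaller disk is the main obstacle. Lemma~\ref{lem:collar}(a) gives $f_{1}\le F_{1}$ only on $[0,r_{1}]$. If $r_{1}\ge\vartheta$ we are done. Otherwise, for $\theta\in[r_{1},\vartheta]$ we have $f_{1}(\theta)=\alpha_{1}$, and we must show $\alpha_{1}\le F_{1}(\theta)$ there. Monotonicity of $F_{1}$ is not available: $F_{1}'=L\cos\theta+(\alpha_{1}-2\pi)\sin\theta$ may turn negative before $\vartheta$ when $\alpha_{1}$ is small. Instead I would study the difference
\[
  g(\theta)=F_{1}(\theta)-\alpha_{1}=L\sin\theta+(2\pi-\alpha_{1})\cos\theta-2\pi .
\]
Its properties on the relevant interval are as follows.
\begin{enumerate}
\item[(i)] At the left end, $g(r_{1})\ge0$, because $f_{1}(r_{1})=\alpha_{1}$ and $f_{1}\le F_{1}$ at $r_{1}$ by Lemma~\ref{lem:collar}(a).
\item[(ii)] At the right end, $g(\vartheta)=L\sin\vartheta-2\pi+(2\pi-\alpha_{1})\cos\vartheta=(2\pi-\alpha_{1})\cos\vartheta\ge0$. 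This uses \eqref{eq:vartheta}, $\alpha_{1}\le2\pi$ from \eqref{eq:areas}, and $\vartheta\le\pi/2$.
\item[(iii)] On $[0,\pi/2]$, $g''=-(L\sin\theta+(2\pi-\alpha_{1})\cos\theta)\le0$, since both coefficients are nonnegative there.
\end{enumerate}
So $g$ is concave on $[r_{1},\vartheta]\subseteq[0,\pi/2]$ and nonnegative at both ends, hence nonnegative throughout. This gives $f_{1}\le F_{1}$ on $[r_{1},\vartheta]$ as well.

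Adding the two collar bounds yields $A_{\mathcal{C}}(\theta)\le F_{1}(\theta)+F_{2}(\theta)=2L\sin\theta$ for $0\le\theta\le\vartheta$. The hypothesis $L\ge2\pi$ enters only to make $\vartheta$ well defined, and through it the endpoint value in (ii).
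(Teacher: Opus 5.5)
Your proposal is correct and follows the paper's proof essentially step for step. The paper makes the same reduction to $f_i\le F_i$ via \eqref{eq:split} and \eqref{eq:Fsum}, uses Lemma~\ref{lem:collar}(a) directly for $\Omega_2$, and handles $\Omega_1$ on $[r_1,\vartheta]$ by the same concavity-plus-endpoints argument applied to $F_1$.
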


\begin{proof}
By \eqref{eq:split} and \eqref{eq:Fsum} it is enough to prove that
$f_{i}\le F_{i}$ on all of $[0,\vartheta]$ for $i=1,2$, for then
$A_{\mathcal{C}}\le F_{1}+F_{2}=2L\sin\theta$ there.

For $i=2$ this is Lemma~\ref{lem:collar}(a), since $\alpha_{2}\ge2\pi$ and
$\vartheta\le\pi/2$. For $i=1$ the same part covers $[0,r_{1}]$, so let
$r_{1}\le\theta\le\vartheta$, where $f_{1}(\theta)=\alpha_{1}$; we must see that
$F_{1}\ge\alpha_{1}$ there. As $2\pi-\alpha_{1}\ge0$ by \eqref{eq:areas},
$F_{1}''(\theta)=-L\sin\theta-(2\pi-\alpha_{1})\cos\theta\le0$ on $[0,\pi/2]$,
so $F_{1}$ is concave on $[r_{1},\vartheta]$ and it is enough to check the two
endpoints. At $\theta=r_{1}$, Lemma~\ref{lem:collar}(a) gives
$F_{1}(r_{1})\ge f_{1}(r_{1})=\alpha_{1}$; and at $\theta=\vartheta$, where
$L\sin\vartheta=2\pi$,
$F_{1}(\vartheta)=2\pi-(2\pi-\alpha_{1})(1-\cos\vartheta)
=\alpha_{1}+(2\pi-\alpha_{1})\cos\vartheta\ge\alpha_{1}$.
\end{proof}

\begin{corollary}\label{cor:bound}
Every curve $\mathcal{C}\subset\Sph$ of length $L\ge2\pi$ satisfies
$\mathcal{J}(\mathcal{C})\ge j(L)$.
\end{corollary}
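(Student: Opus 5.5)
The plan is to combine the layer-cake formula \eqref{eq:layercake} with the tube bound of Proposition~\ref{prop:tube}, and then compute one elementary integral. Split the integral in \eqref{eq:layercake} at $\theta=\vartheta$. On $[0,\vartheta]$, Proposition~\ref{prop:tube} gives $A_{\mathcal{C}}(\theta)\le 2L\sin\theta$. On $[\vartheta,\pi]$ we use only the trivial bound $A_{\mathcal{C}}(\theta)\le4\pi$, so the integrand $1-A_{\mathcal{C}}/(4\pi)$ is nonnegative there. Since the integrand is nonnegative everywhere, we discard the second piece and keep
\[
  \mathcal{J}(\mathcal{C})\ \ge\ \int_{0}^{\vartheta}\Bigl(1-\frac{2L\sin\theta}{4\pi}\Bigr)d\theta
  =\int_{0}^{\vartheta}\Bigl(1-\frac{\sin\theta}{\sin\vartheta}\Bigr)d\theta ,
\]
where we have used $L=2\pi/\sin\vartheta$ from \eqref{eq:vartheta}. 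The integrand is nonnegative on $[0,\vartheta]$, consistent with $2L\sin\theta\le4\pi$ there.

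The remaining step is the computation. The integral equals
\[
  \vartheta-\frac{1-\cos\vartheta}{\sin\vartheta}=\vartheta-\tan\frac{\vartheta}{2},
\]
by the half-angle identity $(1-\cos\vartheta)/\sin\vartheta=\tan(\vartheta/2)$. This is exactly $j(L)$ as defined in \eqref{eq:j}.

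No real obstacle is expected, since all the geometric work sits in Proposition~\ref{prop:tube}. The only points needing care are the following.
\begin{itemize}
\item The tube bound is stated for $0\le\theta\le\vartheta$, with $\vartheta\le\pi/2<\pi$, so the splitting point lies inside $[0,\pi]$.
\item $A_{\mathcal{C}}$ is measurable; in fact it is continuous by \eqref{eq:split}, so the integrals make sense.
\end{itemize}
For the later equality analysis, note that equality forces two things: $A_{\mathcal{C}}=2L\sin\theta$ on $[0,\vartheta]$, and $A_{\mathcal{C}}=4\pi$ on $[\vartheta,\pi]$, that is, $R\le\vartheta$.
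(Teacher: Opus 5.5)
Your proposal is correct and is essentially the paper's proof: drop the nonnegative part of the layer-cake integral beyond $\vartheta$, insert the tube bound of Proposition~\ref{prop:tube} using $2L=4\pi/\sin\vartheta$, and evaluate $\int_{0}^{\vartheta}(1-\sin\theta/\sin\vartheta)\,d\theta=\vartheta-\tan(\vartheta/2)$. Your closing remark on the equality case is also what the paper uses in Step~1 of Proposition~\ref{prop:equality}.
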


\begin{proof}
Discard the part of \eqref{eq:layercake} beyond $\vartheta$, which is
nonnegative because $A_{\mathcal{C}}\le4\pi$, and insert \eqref{eq:tube},
recalling that $2L=4\pi/\sin\vartheta$:
\[
  \mathcal{J}(\mathcal{C})\ \ge\
  \int_{0}^{\vartheta}\Bigl(1-\frac{\sin\theta}{\sin\vartheta}\Bigr)d\theta
  \ =\ \vartheta-\frac{1-\cos\vartheta}{\sin\vartheta}
  \ =\ \vartheta-\tan\frac{\vartheta}{2}\ =\ j(L).
  \qedhere
\]
\end{proof}

\section{Curves that attain the bound}\label{sec:equality}

Call a curve $\mathcal{C}\subset\Sph$ \emph{sphere-filling} if its tube has, at
every radius $\theta\in[0,\vartheta]$, the largest area that \eqref{eq:tube}
allows:
\begin{equation}\label{eq:spherefilling}
  A_{\mathcal{C}}(\theta)=2L\sin\theta,
  \qquad 0\le\theta\le\vartheta .
\end{equation}
At $\theta=\vartheta$ this reads $A_{\mathcal{C}}(\vartheta)=4\pi$. As
$T_{\vartheta}(\mathcal{C})$
is closed, its complement is then an open null set, hence empty, and
$T_{\vartheta}(\mathcal{C})=\Sph$. The name is borrowed from Gerlach and von der
Mosel; here no thickness is assumed. Lemma~\ref{lem:tube-n} below shows that
their ropes satisfy \eqref{eq:spherefilling}.

\begin{proposition}\label{prop:equality}
Let $\mathcal{C}\subset\Sph$ be a curve of length $L\ge2\pi$ with
$\mathcal{J}(\mathcal{C})=j(L)$. Then $\mathcal{C}$ is
sphere-filling, its covering radius is $\vartheta$, and the two components
$\Omega_{1},\Omega_{2}$ of $\Sph\setminus\mathcal{C}$ satisfy
\begin{equation}
  \area(\Omega_{i})=2\pi,\qquad r_{i}=\vartheta,\qquad
  f_{i}(\theta)=L\sin\theta\ \ \text{for }0\le\theta\le\vartheta .
\end{equation}
\end{proposition}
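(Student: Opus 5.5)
The plan is to trace equality back through the chain of inequalities in the proof of Corollary~\ref{cor:bound}. Equality $\mathcal{J}(\mathcal{C})=j(L)$ forces two things. First, the discarded tail $\int_{\vartheta}^{\pi}(1-A_{\mathcal{C}}/4\pi)\,d\theta$ vanishes. Second, $A_{\mathcal{C}}(\theta)=2L\sin\theta$ for almost every $\theta\in[0,\vartheta]$. Both integrands are nonnegative and $A_{\mathcal{C}}$ is continuous, so the second identity holds for every $\theta\in[0,\vartheta]$, which is \eqref{eq:spherefilling}. Thus $\mathcal{C}$ is sphere-filling, and as noted after \eqref{eq:spherefilling} we get $T_{\vartheta}(\mathcal{C})=\Sph$, i.e.\ $R\le\vartheta$. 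Corollary~\ref{cor:cover} gives $R\ge\vartheta$, so $R=\vartheta$.

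Next I will split the equality between the two collars. In the proof of Proposition~\ref{prop:tube} we had $f_i\le F_i$ on $[0,\vartheta]$ for each $i$, with $f_1+f_2=A_{\mathcal{C}}=F_1+F_2$. Hence $f_i=F_i$ on $[0,\vartheta]$ for both $i$. Now evaluate at $\theta=\vartheta$. Since $R=\vartheta$ we have $r_i\le\vartheta$, so $f_i(\vartheta)=\alpha_i$. Then, using $L\sin\vartheta=2\pi$,
\[
\alpha_i=F_i(\vartheta)=2\pi+(\alpha_i-2\pi)(1-\cos\vartheta),
\]
which gives $(\alpha_i-2\pi)\cos\vartheta=0$. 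When $\vartheta<\pi/2$ this yields $\alpha_i=2\pi$ directly. In the endpoint case $\vartheta=\pi/2$ (that is, $L=2\pi$) I will argue differently. There $\alpha_1\le2\pi$, and $f_1=F_1$ must remain $\le\alpha_1$. But $F_1(\theta)=2\pi\sin\theta+(\alpha_1-2\pi)(1-\cos\theta)$, and near $\theta=\pi/2$ this exceeds $\alpha_1$ unless $\alpha_1=2\pi$; I will check this via the concavity argument. Alternatively, at $L=2\pi$ I can invoke the equality case of Proposition~\ref{prop:short}, since $j(2\pi)=\pi/2-1$ coincides with that bound, so the curve is a great circle.

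With $\alpha_i=2\pi$, the comparison function is $F_i=L\sin\theta$, so $f_i(\theta)=L\sin\theta$ on $[0,\vartheta]$. For the inradii, $r_i\le\vartheta$ from $R=\vartheta$. Conversely, Lemma~\ref{lem:collar}(b) applies to both components now that $\alpha_i\ge2\pi$, and gives $r_i\ge\vartheta$. Hence $r_i=\vartheta$.

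I expect the main obstacle to be the identification $\alpha_i=2\pi$ at the degenerate endpoint $\vartheta=\pi/2$, where $\cos\vartheta=0$ kills the information. I will handle it by citing Proposition~\ref{prop:short}. I will also make sure that the almost-everywhere equality upgrades to everywhere equality using continuity of $A_{\mathcal{C}}$ and of the $f_i$ (absolute continuity).
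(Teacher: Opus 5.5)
Your argument is correct. Steps~1 and~3 coincide with the paper's; the difference is in how you obtain $\area(\Omega_i)=2\pi$.

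The paper uses only the collar bound $f_2\le F_2$ and the trivial $f_1\le\alpha_1$ to get $F_1\le\alpha_1$ on $[0,\vartheta]$. It then makes a first-order comparison at the endpoint: $\sin\vartheta-\sin\theta\le(\vartheta-\theta)\cos\theta$, divide by $\cos\theta$, and let $\theta\uparrow\vartheta$. This needs neither $R=\vartheta$ nor the concavity half of the proof of Proposition~\ref{prop:tube}, and it works uniformly, including at $L=2\pi$.

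You instead combine both bounds $f_i\le F_i$ with $f_1+f_2=F_1+F_2$ to get $f_i=F_i$ on $[0,\vartheta]$. You then use $R=\vartheta$ to evaluate at the endpoint, which yields the identity $(\alpha_i-2\pi)\cos\vartheta=0$. This is quicker for $L>2\pi$ and gives you the collar identity of Step~3 for free. However, it carries no information at $\vartheta=\pi/2$, which forces your case split.

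Both of your fallbacks for that case are sound.
\begin{itemize}
\item For the first, what makes $F_1$ exceed $\alpha_1$ just below $\pi/2$ is $F_1(\pi/2)=\alpha_1$ together with $F_1'(\pi/2)=\alpha_1-2\pi<0$. This is essentially the paper's first-order argument. It is not a concavity argument, since concavity by itself does not force $F_1>\alpha_1$ anywhere.
\item For the second, appealing to the equality case of Proposition~\ref{prop:short}, valid because $j(2\pi)=\pi/2-1$, is the cleanest way to close it.
\end{itemize}
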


\begin{proof}
\emph{Step 1: $\mathcal{C}$ is sphere-filling and $R=\vartheta$.} By
\eqref{eq:layercake} and the proof of Corollary~\ref{cor:bound},
\[
  \mathcal{J}(\mathcal{C})-j(L)
  =\frac{1}{4\pi}\int_{0}^{\vartheta}
     \bigl(2L\sin\theta-A_{\mathcal{C}}(\theta)\bigr)d\theta
   +\int_{\vartheta}^{\pi}\Bigl(1-\frac{A_{\mathcal{C}}(\theta)}{4\pi}\Bigr)d\theta ,
\]
and both integrands are nonnegative, by Proposition~\ref{prop:tube} and by
$A_{\mathcal{C}}\le4\pi$. Equality therefore forces
$A_{\mathcal{C}}(\theta)=2L\sin\theta$ for almost every $\theta\in(0,\vartheta)$,
and hence for every $\theta\in[0,\vartheta]$, both sides being continuous
(Subsection~\ref{ssec:tube}). At $\theta=\vartheta$ this reads
$A_{\mathcal{C}}(\vartheta)=2L\sin\vartheta=4\pi$. So \eqref{eq:spherefilling} holds and
$T_{\vartheta}(\mathcal{C})=\Sph$, that is $R\le\vartheta$, while
$R\ge\vartheta$ by Corollary~\ref{cor:cover}; so $R=\vartheta$.

\emph{Step 2: $\alpha_{1}=\alpha_{2}=2\pi$.} Keep the labeling
$\alpha_{1}\le2\pi\le\alpha_{2}$ of Subsection~\ref{ssec:tube}. Since
$\alpha_{2}\ge2\pi$, Lemma~\ref{lem:collar}(a) bounds $f_{2}$ by $F_{2}$ on
$[0,\vartheta]$, while $f_{1}\le\alpha_{1}$ always, so
\eqref{eq:spherefilling}, \eqref{eq:split} and \eqref{eq:Fsum} give
\[
  F_{1}(\theta)+F_{2}(\theta)=2L\sin\theta=A_{\mathcal{C}}(\theta)
  =f_{1}(\theta)+f_{2}(\theta)\le\alpha_{1}+F_{2}(\theta),
  \qquad 0\le\theta\le\vartheta ,
\]
that is $F_{1}\le\alpha_{1}$ on $[0,\vartheta]$, which since $L\sin\vartheta=2\pi$
reads $(2\pi-\alpha_{1})\cos\theta\le L(\sin\vartheta-\sin\theta)$. Since $\cos$
is decreasing on $[0,\pi/2]$ we have
$\sin\vartheta-\sin\theta\le(\vartheta-\theta)\cos\theta$, so, dividing by
$\cos\theta>0$,
$2\pi-\alpha_{1}\le L(\vartheta-\theta)$ for every
$\theta\in[0,\vartheta)$. Letting $\theta\uparrow\vartheta$ gives
$\alpha_{1}\ge2\pi$, hence $\alpha_{1}=\alpha_{2}=2\pi$.

\emph{Step 3: collars and inradii.} With $\alpha_{i}=2\pi$ we have
$F_{i}(\theta)=L\sin\theta$, so Lemma~\ref{lem:collar}(a) gives
$f_{i}(\theta)\le L\sin\theta$ on
$[0,\pi/2]$, while $f_{1}(\theta)+f_{2}(\theta)=A_{\mathcal{C}}(\theta)=2L\sin\theta$
for $\theta\le\vartheta$, hence $f_{i}(\theta)=L\sin\theta$ there. At
$\theta=\vartheta$ this reads
$f_{i}(\vartheta)=L\sin\vartheta=2\pi=\alpha_{i}$, so
$\Omega_{i}\cap\{\dist(\,\cdot\,,\mathcal{C})>\vartheta\}$ is an open null set, hence
empty, and $r_{i}\le\vartheta$. Finally $r_{i}\ge\vartheta$ by
Lemma~\ref{lem:collar}(b), since $\alpha_{i}=2\pi$.
\end{proof}

\begin{corollary}\label{cor:exact}
Let $n$ be a positive integer. Every curve $\mathcal{C}\subset\Sph$ of length
$L\le L_{n}$ satisfies $\mathcal{J}(\mathcal{C})\ge j(L_{n})$, and equality forces
$L=L_{n}$.
\end{corollary}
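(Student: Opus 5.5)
We split according to the length $L$ of $\mathcal{C}$: either $L<2\pi$, or $2\pi\le L\le L_{n}$. Throughout we use the quantity
\[
  \frac{\pi}{2}-\frac{L}{2\pi}-j(L_{n}).
\]

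\emph{Long curves, $2\pi\le L\le L_{n}$.} Corollary~\ref{cor:bound} gives $\mathcal{J}(\mathcal{C})\ge j(L)$. Since $j$ is strictly decreasing on $[2\pi,\infty)$ (noted after \eqref{eq:j}), we get $j(L)\ge j(L_{n})$, with equality only when $L=L_{n}$. So if $\mathcal{J}(\mathcal{C})=j(L_{n})$, the chain $j(L_{n})=\mathcal{J}(\mathcal{C})\ge j(L)\ge j(L_{n})$ forces $j(L)=j(L_{n})$, and strict monotonicity gives $L=L_{n}$.

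\emph{Short curves, $L<2\pi$.} Proposition~\ref{prop:short} gives
\[
  \mathcal{J}(\mathcal{C})\ge\frac{\pi}{2}-\frac{L}{2\pi}>\frac{\pi}{2}-1 .
\]
It remains to show $j(L_{n})\le\pi/2-1$ for every $n$. The value $\pi/2-1$ is exactly $j(2\pi)=j(L_{1})$, since $\vartheta_{1}=\pi/2$ and $\tan(\pi/4)=1$. As $L_{n}\ge L_{1}=2\pi$ and $j$ is decreasing, $j(L_{n})\le j(L_{1})=\pi/2-1$. Combining,
\[
  \mathcal{J}(\mathcal{C})>\frac{\pi}{2}-1\ge j(L_{n}),
\]
so the inequality holds strictly and equality is impossible for $L<2\pi$.

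Alternatively, one can use the remark in the introduction that the difference between the two bounds, $\vartheta+\cot\vartheta-\pi/2$, is nonnegative. This route is unnecessary, since the strict inequality $\pi/2-L/(2\pi)>\pi/2-1$ already settles the short case.

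There is no real obstacle. The only point needing care is the strict inequality in the short case, which keeps short curves from attaining equality even for $n=1$. The boundary case $L=2\pi$ falls under the long-curve argument, and there equality forces $L=L_{n}$ only when $n=1$. That is consistent with the claim, because for $n>1$ we have $j(2\pi)>j(L_{n})$ strictly.
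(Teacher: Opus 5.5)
Your proof is correct and follows the paper's own argument essentially step for step. For $L<2\pi$ you use \eqref{eq:short} to get $\mathcal{J}(\mathcal{C})>\pi/2-1=j(2\pi)\ge j(L_{n})$, and for $2\pi\le L\le L_{n}$ you combine Corollary~\ref{cor:bound} with the strict monotonicity of $j$ on $[2\pi,\infty)$. The quantity you announce at the start is never used, so you can drop it.
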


\begin{proof}
If $L<2\pi$ then \eqref{eq:short} gives
$\mathcal{J}(\mathcal{C})>\pi/2-1=j(2\pi)\ge j(L_{n})$; and if $2\pi\le L\le L_{n}$
then Corollary~\ref{cor:bound} and the monotonicity of $j$ give
$\mathcal{J}(\mathcal{C})\ge j(L)\ge j(L_{n})$. Either way
$\mathcal{J}(\mathcal{C})\ge j(L_{n})$, and equality excludes the first case, so
$2\pi\le L\le L_{n}$ and $j(L)=j(L_{n})$. As $j$ is strictly decreasing on
$[2\pi,\infty)$, this forces $L=L_{n}$.
\end{proof}

\begin{corollary}\label{cor:greatcircle}
Every curve $\mathcal{C}\subset\Sph$ of length $L\le2\pi$ satisfies
$\mathcal{J}(\mathcal{C})\ge\pi/2-1$, with equality if and only if $\mathcal{C}$ is a
great circle. In particular Theorem~\ref{thm:main} holds for $n=1$, with the
great circle as its only minimizer.
\end{corollary}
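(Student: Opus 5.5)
The lower bound and the reduction to length exactly $2\pi$ come from results already proved; the work is to show that the equality case forces a great circle.

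\emph{Lower bound.} Let $\mathcal{C}$ have length $L\le2\pi=L_{1}$. Corollary~\ref{cor:exact} with $n=1$ gives
\[
  \mathcal{J}(\mathcal{C})\ \ge\ j(L_{1})=\tfrac{\pi}{2}-\tan\tfrac{\pi}{4}=\tfrac{\pi}{2}-1 .
\]
The same corollary says equality forces $L=2\pi$.

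\emph{A great circle attains the bound.} A great circle has length $2\pi$, and Proposition~\ref{prop:short} gives $\mathcal{J}=\pi/2-2\pi/(2\pi)=\pi/2-1$.

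\emph{Equality forces a great circle.} Suppose $\mathcal{J}(\mathcal{C})=\pi/2-1$. Then $L=2\pi$, so $\pi/2-1=\pi/2-L/(2\pi)$ and equality holds in \eqref{eq:short}. Proposition~\ref{prop:short} says that, among simple closed curves of length $L\le2\pi$, only the circles of length $L$ attain \eqref{eq:short}. Hence $\mathcal{C}$ is a circle of length $2\pi$, i.e.\ of geodesic radius $\arcsin 1=\pi/2$, which is a great circle.

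As a cross-check via Proposition~\ref{prop:equality}: with $\vartheta=\pi/2$ it gives inradius $\pi/2$ on both sides of $\mathcal{C}$. By \eqref{eq:ball}, a ball of radius $\pi/2$ sits inside a disk of area $2\pi$, so each component is an open hemisphere. This again shows $\mathcal{C}$ is a great circle.

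\emph{Theorem~\ref{thm:main} for $n=1$.} The inequality \eqref{eq:main} and the length statement are exactly the above. The curves $\bta{1}{k}$ reduce to $k=0$, which is presumably the great circle; this has to be checked against Section~\ref{sec:curves}.

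\emph{Main obstacle.} There is essentially none, since the rigidity was already done in Proposition~\ref{prop:short}. The one point needing care is that its uniqueness clause is stated for curves of length $L\le2\pi$, which includes the endpoint $L=2\pi$. If it were only available for $L<2\pi$, I would instead use the hemisphere argument from Proposition~\ref{prop:equality} above.
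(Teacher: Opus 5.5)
Your proposal is correct and is essentially the paper's proof: equality forces $L=2\pi$, hence equality in \eqref{eq:short}, and the uniqueness clause of Proposition~\ref{prop:short} (which does cover $L=2\pi$) gives the great circle; Section~\ref{sec:curves} confirms that $\bta{1}{0}$ is the great circle. The paper gets the bound and the reduction to $L=2\pi$ directly from $\mathcal{J}(\mathcal{C})\ge\pi/2-L/(2\pi)\ge\pi/2-1$, so your detour through Corollary~\ref{cor:exact} is valid but unnecessary, since at $L=2\pi$ it invokes the collar machinery behind Corollary~\ref{cor:bound}.
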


\begin{proof}
By \eqref{eq:short}, $\mathcal{J}(\mathcal{C})\ge\pi/2-L/(2\pi)\ge\pi/2-1$, and
the second inequality is strict unless $L=2\pi$. So equality forces $L=2\pi$ and
$\mathcal{J}(\mathcal{C})=\pi/2-L/(2\pi)$, which by Proposition~\ref{prop:short}
holds for the circle of length $2\pi$, a great circle, and for no other curve.
\end{proof}

\section{The sphere-filling ropes}\label{sec:curves}

The \emph{thickness} of a closed curve in $\R^{3}$ is, in the sense of Gonzalez
and Maddocks \cite{GM99} (see also \cite[(1.1)]{GvdM2011b}), the infimum over
triples of distinct points on it of the radius of the smallest circle through
those three points. We recall the construction of Gerlach and von der Mosel
\cite{GvdM2011b}.

Fix a positive integer $n$ and a $k\in\{0,\dots,n-1\}$ with $\gcd(k,n)=1$.
Start with the $n$ latitude circles of $\Sph$, the $i$th at colatitude
$(2i+1)\vartheta_{n}$ for $0\le i\le n-1$: consecutive ones are at distance
$2\vartheta_{n}$, and the extreme ones are the circles of geodesic radius
$\vartheta_{n}$ about the poles. Cut $\Sph$ along a great
circle through the poles into two closed hemispheres, which meet each latitude
circle orthogonally and cut it into two semicircles. Keep one hemisphere fixed
and rotate the other, about the axis of the cutting circle, through the angle
$2k\vartheta_{n}$. Every endpoint of a fixed semicircle then meets exactly one
endpoint of a rotated one, and the $2n$ semicircles join up into
$\gcd(k,n)$ closed loops \cite[Lemmas~1--2]{GvdM2011b}. The hypothesis
$\gcd(k,n)=1$ leaves a single loop $\bta{n}{k}$. The value $k=0$ occurs only
for $n=1$, where the construction is vacuous and $\bta{1}{0}$ is a great
circle.

For $n\ge2$, \cite[Lemma~1]{GvdM2011b} shows that $\bta{n}{k}$ is a simple
closed piecewise circular curve whose constant-speed parametrization is of
class $C^{1,1}$ and whose thickness is $\sin\vartheta_{n}$. For $n=1$ this is
the great circle, of thickness $1=\sin\vartheta_{1}$. Up to congruence, the
$\bta{n}{k}$ are exactly the longest closed curves on $\Sph$ of thickness at
least $\sin\vartheta_{n}$ \cite[Thms.~2--3]{GvdM2011b}.

Figure~\ref{fig:minimizer} shows $\bta{3}{1}$ together with the tubes computed
in the next lemma.

\begin{figure}[ht]
  \centering
  \includegraphics[width=0.8\textwidth]{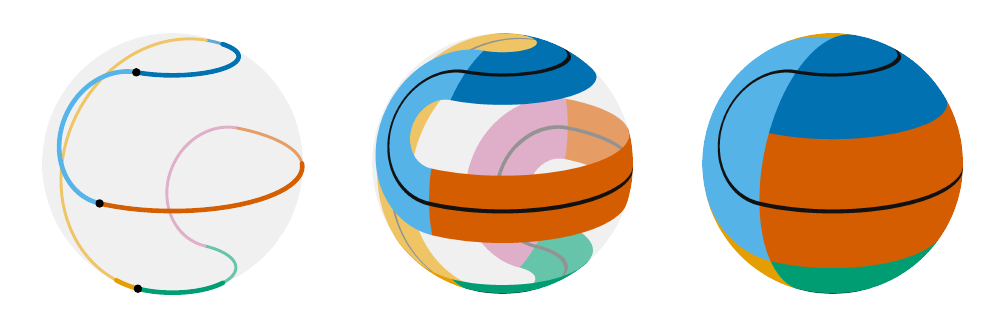}
  \caption{The curve $\bta{3}{1}$, of length $4\pi$ (left), and the six patches
  swept by the geodesic segments of length $\theta$ issuing normally on both
  sides of its six semicircular arcs. These patches have disjoint interiors
  \cite[Lemma~3]{GvdM2011b} and their union is the $\theta$-tube, of area
  $8\pi\sin\theta$: at $\theta<\vartheta_{3}$ (center) it leaves gaps, and at
  $\theta=\vartheta_{3}$ (right) the patches tile $\Sph$.}
  \label{fig:minimizer}
\end{figure}

\begin{lemma}\label{lem:tube-n}
For $n$ and $k$ as above, $\length(\bta{n}{k})=L_{n}$ and
$A_{\bta{n}{k}}(\theta)=2L_{n}\sin\theta$ for $0\le\theta\le\vartheta_{n}$.
Consequently $T_{\vartheta_{n}}(\bta{n}{k})=\Sph$ and
$\mathcal{J}(\bta{n}{k})=j(L_{n})$.
\end{lemma}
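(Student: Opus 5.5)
The length computation comes first. The curve $\bta{n}{k}$ is made of the $2n$ semicircles of the $n$ latitude circles, so its length equals the total length of those circles. The $i$th circle has colatitude $(2i+1)\vartheta_{n}$ and length $2\pi\sin((2i+1)\vartheta_{n})$, so
\[
  \length(\bta{n}{k})=2\pi\sum_{i=0}^{n-1}\sin\bigl((2i+1)\vartheta_{n}\bigr)
  =2\pi\,\frac{\sin^{2}(n\vartheta_{n})}{\sin\vartheta_{n}}
  =\frac{2\pi}{\sin\vartheta_{n}}=L_{n},
\]
using the standard identity for sums of sines in arithmetic progression and $n\vartheta_{n}=\pi/2$. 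For $n=1$ this is just the great circle.

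For the tube area, the thickness of $\bta{n}{k}$ is $\sin\vartheta_{n}\ge\sin\theta$ for $0\le\theta\le\vartheta_{n}$. I would invoke \cite[Prop.~1]{GvdM2011b}, as recalled in the introduction: the open $\theta$-tube of a curve of thickness at least $\sin\theta$ has area exactly $2L\sin\theta$. Subsection~\ref{ssec:tube} shows that open and closed tubes have equal area, so $A_{\bta{n}{k}}(\theta)=2L_{n}\sin\theta$ on $[0,\vartheta_{n}]$. The one point needing care is the endpoint $\theta=\vartheta_{n}$, where the thickness equals $\sin\theta$ exactly rather than exceeding it. If the cited proposition requires strict inequality, I would take $\theta\uparrow\vartheta_{n}$ and use the continuity of $A_{\mathcal{C}}$. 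As a cross-check, one can argue directly as in Figure~\ref{fig:minimizer}: the normal geodesic patches of the arcs have disjoint interiors \cite[Lemma~3]{GvdM2011b}, and each semicircle of colatitude $\phi$ sweeps a band of area $\pi\bigl(\cos(\phi-\theta)-\cos(\phi+\theta)\bigr)=2\pi\sin\phi\sin\theta$. Summing over the semicircles gives $2L_{n}\sin\theta$.

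The covering statement then follows as in the remark after \eqref{eq:spherefilling}. We have $A(\vartheta_{n})=2L_{n}\sin\vartheta_{n}=4\pi$, and $T_{\vartheta_{n}}$ is closed, so its complement is an open null set and hence empty.

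For $\mathcal{J}$, insert the computed tube areas into the layer-cake formula \eqref{eq:layercake}. On $[\vartheta_{n},\pi]$ the integrand vanishes because $A=4\pi$ there. On $[0,\vartheta_{n}]$ it is $1-\sin\theta/\sin\vartheta_{n}$. The integral is exactly the one computed in Corollary~\ref{cor:bound}, so $\mathcal{J}(\bta{n}{k})=\vartheta_{n}-\tan(\vartheta_{n}/2)=j(L_{n})$.

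The main obstacle is the exact tube area. It depends entirely on the cited non-overlap result (or on Lemma~3 there), applied up to the borderline radius where the thickness is attained.
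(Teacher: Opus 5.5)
Your proof is correct. For the tube area, the covering statement and the value of $\mathcal{J}$, it is the paper's argument: \cite[Prop.~1]{GvdM2011b} applied with thickness $\sin\vartheta_{n}$, then closedness of $T_{\vartheta_{n}}$, then the layer-cake integral from Corollary~\ref{cor:bound}.

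The one real difference is how you get the length, and the inference runs in the opposite direction. The paper never sums circle lengths. It combines Prop.~1 at $\theta=\vartheta_{n}$ with the cited fact \cite[(2.4)]{GvdM2011b} that the $\vartheta_{n}$-tube has area $4\pi$, which gives $2\,\length(\bta{n}{k})\sin\vartheta_{n}=4\pi$.

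You instead get $\length(\bta{n}{k})=L_{n}$ directly from the identity $\sum_{i=0}^{n-1}\sin((2i+1)\vartheta_{n})=\sin^{2}(n\vartheta_{n})/\sin\vartheta_{n}$. Then $A(\vartheta_{n})=2L_{n}\sin\vartheta_{n}=4\pi$ follows from Prop.~1, so you never need (2.4). Your route is more self-contained: only the thickness value and Prop.~1 are imported, and the sphere-filling property becomes a consequence. The paper's route avoids the trigonometric sum at the cost of one more citation.

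Your endpoint concern is unnecessary: Prop.~1, as quoted in the paper, allows thickness equal to $\sin\theta$ for $\theta\in(0,\pi/2]$. Your continuity fallback would work anyway. Your patch cross-check is a valid third route to the tube area. It also uses that the tube is the union of the normal patches, which holds because the curve is $C^{1}$ and minimizing geodesics to it arrive orthogonally.
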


\begin{proof}
By \cite[Prop.~1]{GvdM2011b}, if a closed rectifiable curve has thickness at
least $\sin\theta$ for some $\theta\in(0,\pi/2]$, its $\theta$-tube has area
$2\sin\theta$ times the length of the curve, and by \cite[(2.4)]{GvdM2011b} the
$\vartheta_{n}$-tube of $\bta{n}{k}$ has area $4\pi$. As the thickness of
$\bta{n}{k}$ is $\sin\vartheta_{n}$, the two give
$2\length(\bta{n}{k})\sin\vartheta_{n}=4\pi$, that is
$\length(\bta{n}{k})=L_{n}$, and hence $A_{\bta{n}{k}}(\theta)=2L_{n}\sin\theta$
throughout $[0,\vartheta_{n}]$.

Thus $\bta{n}{k}$ is sphere-filling, and $T_{\vartheta_{n}}(\bta{n}{k})=\Sph$ as
observed after \eqref{eq:spherefilling}. Therefore $A_{\bta{n}{k}}\equiv4\pi$ beyond
$\vartheta_{n}$, and \eqref{eq:layercake} gives
$\mathcal{J}(\bta{n}{k})=\int_{0}^{\vartheta_{n}}
 (1-\sin\theta/\sin\vartheta_{n})\,d\theta
=\vartheta_{n}-\tan\frac{\vartheta_{n}}{2}=j(L_{n})$.
\end{proof}

\begin{proof}[Proof of Theorem~\ref{thm:main}]
Inequality \eqref{eq:main} and the assertion that equality forces $L=L_{n}$ are
Corollary~\ref{cor:exact}. Equality does hold for
$\bta{n}{k}$, which by Lemma~\ref{lem:tube-n} has length $L_{n}$ and
$\mathcal{J}(\bta{n}{k})=j(L_{n})$. The case $n=3$, $k=1$ is
the curve \eqref{eq:six-arc} below, and $\bta{3}{2}$ is its mirror image by
Remark~\ref{rem:mirror}.
\end{proof}

The first values of $L_{n}=2\pi/\sin\frac{\pi}{2n}$ and
$J(L_{n})=\frac{\pi}{2n}-\tan\frac{\pi}{4n}$ are
{\tiny
\begin{equation}
\begin{aligned}
  L_{1}&=2\pi, & J(L_{1})&=\tfrac{\pi}{2}-1&&=0.570796\ldots, \\
  L_{2}&=2\pi\sqrt{2}, & J(L_{2})&=\tfrac{\pi}{4}-\sqrt{2}+1&&=0.371184\ldots, \\
  L_{3}&=4\pi, & J(L_{3})&=\tfrac{\pi}{6}+\sqrt{3}-2&&=0.255649\ldots, \\
  L_{4}&=2\pi\sqrt{4+2\sqrt{2}}, &
       J(L_{4})&=\tfrac{\pi}{8}+1+\sqrt{2}-\sqrt{4+2\sqrt{2}}&&=0.193786\ldots.
\end{aligned}
\end{equation}
}
\subsection{Kimberling's length}

In this subsection $x,y,z$ are ambient Cartesian coordinates on $\R^{3}$.
Take $n=3$, so that $\vartheta_{3}=\pi/6$, the three
latitude circles are at colatitudes $\pi/6$, $\pi/2$, $5\pi/6$ and $L_{3}=4\pi$,
and take $k=1$. Let the cutting circle be $\{y=0\}$, so that the turning angle
$2k\vartheta_{3}=\pi/3$ is measured about the $y$-axis and the turning rotation is
\begin{equation}
  Q(x,y,z)=\Bigl(\tfrac{1}{2}x+\tfrac{\sqrt3}{2}z,\ y,\
  -\tfrac{\sqrt3}{2}x+\tfrac{1}{2}z\Bigr).
\end{equation}
Parametrize the three latitude circles by
\begin{equation}
  c_{0}(t)=\bigl(\tfrac12\cos t,\ \tfrac12\sin t,\ \tfrac{\sqrt3}{2}\bigr),
  \quad
  c_{1}(t)=(\cos t,\ \sin t,\ 0),
  \quad
  c_{2}(t)=\bigl(\tfrac12\cos t,\ \tfrac12\sin t,\ -\tfrac{\sqrt3}{2}\bigr),
\end{equation}
so that $c_{i}([0,\pi])$ is the semicircle in $\{y\ge0\}$, which stays fixed,
and $c_{i}([-\pi,0])$ the one in $\{y\le0\}$, which is turned by $Q$.
Traversing the six semicircles of $\bta{3}{1}$ in the order
$c_{0}$, $Qc_{1}$, $c_{2}$, $Qc_{2}$, $c_{1}$, $Qc_{0}$ yields the unit-speed
parametrization $\gamma\colon[0,4\pi]\to\Sph$ asked for in Problem~10,
{\tiny
\begin{equation}\label{eq:six-arc}
  \gamma(s)=
  \begin{cases}
    c_{0}(2s), & 0\le s\le\tfrac{\pi}{2},\\[2pt]
    Q\,c_{1}\bigl(s-\tfrac{3\pi}{2}\bigr), & \tfrac{\pi}{2}\le s\le\tfrac{3\pi}{2},\\[2pt]
    c_{2}(2s-3\pi), & \tfrac{3\pi}{2}\le s\le2\pi,\\[2pt]
    Q\,c_{2}(4\pi-2s), & 2\pi\le s\le\tfrac{5\pi}{2},\\[2pt]
    c_{1}\bigl(\tfrac{7\pi}{2}-s\bigr), & \tfrac{5\pi}{2}\le s\le\tfrac{7\pi}{2},\\[2pt]
    Q\,c_{0}(7\pi-2s), & \tfrac{7\pi}{2}\le s\le4\pi .
  \end{cases}
\end{equation}
}
\begin{remark}[mirror images]\label{rem:mirror}
For an integer $n\ge2$ and $1\le k<n$ with $\gcd(k,n)=1$, the curves $\bta{n}{k}$
and $\bta{n}{n-k}$ are mirror images. Indeed, reflection in the cutting plane
fixes each latitude circle, exchanges the two hemispheres and reverses the
turning angle, so it carries $\bta{n}{k}$ to a curve congruent to the one built
with turning angle $-2k\vartheta_{n}$. Turning the moving hemisphere by a
further $\pi$ about the same axis merely permutes its semicircles, exchanging
the $i$th and $(n-1-i)$th latitude circles, so turning angles differing by
$\pi$ produce the same curve, and $-2k\vartheta_{n}+\pi=2(n-k)\vartheta_{n}$.

By \cite[Lemma~1]{GvdM2011b} no rigid motion carries $\bta{n}{k_{1}}$ to
$\bta{n}{k_{2}}$ when $k_{1}\ne k_{2}$, so $\bta{n}{k}$ and $\bta{n}{n-k}$ are
mirror images that are not rotations of one another unless $k=n-k$, which under
$\gcd(k,n)=1$ occurs only for $n=2$. In particular $\bta{3}{1}$ and
$\bta{3}{2}$ are mirror images but are not congruent by a rotation.
\end{remark}

\begin{proposition}\label{prop:classify}
Let $\mathcal{C}\subset\Sph$ be a curve of length $L\ge2\pi$ with
$\mathcal{J}(\mathcal{C})=j(L)$ and thickness at least $\sin\vartheta$. Then
$\vartheta=\vartheta_{n}$ and $L=L_{n}$ for some positive integer $n$, and
$\mathcal{C}$ is congruent to one of the curves $\bta{n}{k}$.
\end{proposition}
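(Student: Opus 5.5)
The plan is to reduce the statement to the classification theorem of Gerlach and von der Mosel \cite[Thms.~2--3]{GvdM2011b}: a closed curve of thickness at least $\sin\theta$ whose $\theta$-tube covers $\Sph$ can only exist when $\theta=\vartheta_{n}$ for some $n$, and then it is congruent to some $\bta{n}{k}$. All that has to be supplied is that our minimizer satisfies the hypotheses of that theorem in the form it is stated there.

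First, Proposition~\ref{prop:equality} applies because $\mathcal{J}(\mathcal{C})=j(L)$. It tells us that $\mathcal{C}$ is sphere-filling and that $T_{\vartheta}(\mathcal{C})=\Sph$. The thickness assumption $\Delta[\mathcal{C}]\ge\sin\vartheta$ then puts us exactly in the setting of \cite{GvdM2011b}, which concerns curves of thickness at least $\sin\theta$ with $\theta\in(0,\pi/2]$. Two facts from there will be used:
\begin{itemize}
\item by \cite[Prop.~1]{GvdM2011b}, such a curve has $\theta$-tube of area $2L\sin\theta$, and this is consistent with $L\sin\vartheta=2\pi$;
\item its $\vartheta$-tube equals $\Sph$.
\end{itemize}
So $\mathcal{C}$ is a curve of thickness at least $\sin\vartheta$ whose length attains the upper bound $L\le 2\pi/\sin\vartheta$ forced by the tube area.

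Second, I would invoke the characterization in \cite[Thms.~2--3]{GvdM2011b}. It says that the length of a curve of thickness at least $\sin\theta$ is at most $2\pi/\sin\theta$, with equality (the sphere-filling case) only if $\theta=\pi/(2n)$, i.e.\ $\theta=\vartheta_{n}$ for some integer $n$. In that case the curve is congruent to some $\bta{n}{k}$ with $\gcd(k,n)=1$. Applying this with $\theta=\vartheta$ gives $\vartheta=\vartheta_{n}$, hence $L=L_{n}$, and it gives the congruence.

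The main obstacle is matching the hypotheses precisely. Gerlach--von der Mosel may state their theorem in terms of open tubes, or as "maximal length among curves of thickness $\ge\sin\theta$" rather than "$\theta$-tube covers the sphere". For the open-versus-closed issue I will use the remark after \eqref{eq:split}: level sets of the distance are null, so open and closed tubes have the same area. Then the open $\vartheta$-tube has area $4\pi$ and is dense. If their result needs covering by the open tube, density together with the tube-area identity should suffice; otherwise I will argue from length maximality. Indeed, any curve of thickness $\ge\sin\vartheta$ has a non-overlapping $\vartheta$-tube of area $2L'\sin\vartheta\le4\pi$, so $L'\le L$, which makes $\mathcal{C}$ a longest such curve. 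The $n=1$ case, where $\vartheta=\pi/2$, can alternatively be settled by Corollary~\ref{cor:greatcircle}.
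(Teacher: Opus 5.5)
Your reduction matches the paper's: Proposition~\ref{prop:equality} makes $\mathcal{C}$ sphere-filling (its $\vartheta$-tube, open or closed, has area $4\pi$), and a theorem of Gerlach and von der Mosel then finishes. The gap is in which of their results does the finishing.

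The paper cites \cite[Thms.~2--3]{GvdM2011b} only for a statement at the special thicknesses $\sin\vartheta_{n}$: there the longest curves are, up to congruence, exactly the $\bta{n}{k}$. The statement needed here is different. For an arbitrary $\vartheta\in(0,\pi/2]$, a sphere-filling curve of thickness at least $\sin\vartheta$ must have $\vartheta=\vartheta_{n}$ and be congruent to some $\bta{n}{k}$. That is \cite[Thm.~4]{GvdM2011b}, and it is what the paper's proof invokes.

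Reading \emph{equality only if $\theta=\pi/(2n)$} into Thms.~2--3 therefore credits them with precisely the nontrivial content of the proposition. Your fallback through length maximality cannot supply it. Knowing that $\mathcal{C}$ is a longest curve of thickness at least $\sin\vartheta$ gives nothing from a classification stated only at the $\vartheta_{n}$, unless $\vartheta$ is already known to be one of them, which is what has to be proved. Squeezing $\vartheta_{n+1}<\vartheta<\vartheta_{n}$ gives no contradiction either. $\mathcal{C}$ is admissible at thickness $\sin\vartheta_{n+1}$ with length below $L_{n+1}$, and it need not be admissible at thickness $\sin\vartheta_{n}$.

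The fix is to cite the sphere-filling classification, Thm.~4, directly. First check its regularity hypothesis, as the paper does: a curve of positive thickness has a $C^{1,1}$ arclength parametrization with nonvanishing derivative \cite[\S1]{GvdM2011b}.
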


\begin{proof}
By Proposition~\ref{prop:equality} the $\vartheta$-tube of $\mathcal{C}$ has
area $2L\sin\vartheta=4\pi$, so $\mathcal{C}$ is sphere-filling in the sense of
Gerlach and von der Mosel. A curve of positive thickness has a $C^{1,1}$ arclength parametrization with
nonvanishing derivative \cite[\S1]{GvdM2011b}, so $\mathcal{C}$ satisfies the
hypotheses of \cite[Thm.~4]{GvdM2011b} with $\vartheta\in(0,\pi/2]$ and
thickness at least $\sin\vartheta$. That theorem gives $\vartheta=\vartheta_{n}$ and
$\mathcal{C}=\bta{n}{k}$
up to congruence, for some positive integer $n$ and some admissible $k$. Finally
$L=2\pi/\sin\vartheta=L_{n}$.
\end{proof}

\section{Other lengths}\label{sec:other}

\begin{proposition}\label{prop:other}
The function $J$ is nonincreasing on $(0,\infty)$. Consequently
\begin{equation}\label{eq:bracket}
  j(L)\ \le\ J(L)\ \le\ j(L_{n})\qquad
  \text{for every positive integer }n\text{ with }L\ge L_{n}.
\end{equation}
\end{proposition}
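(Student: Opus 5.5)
The plan is to show that, for $L<L'$, every curve $\mathcal{C}$ of length $L$ can be replaced by curves of length exactly $L'$ whose value of $\mathcal{J}$ is at most $\mathcal{J}(\mathcal{C})+\varepsilon$, for every $\varepsilon>0$. This gives $J(L')\le J(L)$. Once monotonicity is known, \eqref{eq:bracket} is immediate. The lower bound $j(L)\le J(L)$ for $L\ge2\pi$ is Corollary~\ref{cor:bound}, since the infimum of quantities each at least $j(L)$ is at least $j(L)$. The upper bound follows because $L\ge L_{n}$ gives $J(L)\le J(L_{n})=j(L_{n})$ by Theorem~\ref{thm:main}.

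The key observation for monotonicity is that $\mathcal{J}$ is monotone under inclusion of the curve. If $\mathcal{C}\subseteq\mathcal{C}'$ as sets then $\dist(x,\mathcal{C}')\le\dist(x,\mathcal{C})$ pointwise, so $\mathcal{J}(\mathcal{C}')\le\mathcal{J}(\mathcal{C})$. The same holds approximately if $\mathcal{C}$ lies in the $\delta$-neighborhood of $\mathcal{C}'$, since then $\dist(x,\mathcal{C}')\le\dist(x,\mathcal{C})+\delta$, and hence $\mathcal{J}(\mathcal{C}')\le\mathcal{J}(\mathcal{C})+\delta$. It therefore suffices to build a simple closed curve $\mathcal{C}'$ of length exactly $L'$ that lies within Hausdorff distance $\delta$ of $\mathcal{C}$, and in fact passes $\delta$-close to every point of $\mathcal{C}$.

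The construction goes as follows. Fix a short subarc of $\mathcal{C}$, say one of length $\eta<\delta$, and replace it by a simple ``wiggly'' arc with the same endpoints that stays inside the $\delta$-neighborhood of the original subarc and avoids the rest of $\mathcal{C}$. A rectifiable simple arc has a thin neighborhood avoiding the rest of the curve away from its endpoints. Inside such a strip we can insert a zigzag or small oscillations of any prescribed total length $\ge\eta$. Varying the amplitude continuously makes the length a continuous function of a parameter, taking value $\eta$ at amplitude $0$ and arbitrarily large values. By the intermediate value theorem some amplitude gives total curve length exactly $L'$.

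The expected main obstacle is the topology of a general rectifiable simple curve: guaranteeing that the modified arc stays simple and disjoint from the rest of $\mathcal{C}$. The cleanest route is to choose the subarc near a point where $\gamma$ is differentiable with $|\gamma'|=1$, which happens almost everywhere. Near such a point a small piece of $\mathcal{C}$ is a graph over a short geodesic segment in suitable coordinates, at least after restricting to a subarc with distinct endpoints that are close together. Alternatively, one can simply replace that subarc by the short geodesic chord plus a zigzag in a thin rectangle around the chord. This requires that the rectangle meets $\mathcal{C}$ only along the removed subarc, which can be arranged by shrinking the rectangle, using differentiability at the chosen point. If that proves delicate, a fallback is to note that $J(L)$ can be computed over piecewise-geodesic (polygonal) simple curves. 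Such curves approximate any curve in Hausdorff distance with length converging from below, and approximating $\mathcal{J}$ this way is continuous under Hausdorff convergence. For polygons the local modification near an edge is elementary.
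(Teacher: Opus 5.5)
Your reduction is right. The bracket follows from monotonicity, Corollary~\ref{cor:bound} and Theorem~\ref{thm:main} as you say. Your one-sided estimate ($\mathcal{C}$ within $\delta$ of $\mathcal{C}'$ implies $\mathcal{J}(\mathcal{C}')\le\mathcal{J}(\mathcal{C})+\delta$) is also correct.

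The gap is in the one nontrivial step: producing a \emph{simple} curve of length exactly $L'$.

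\textbf{The graph claim is false.} You claim that near a point where $\gamma'(s_0)$ exists with norm $1$, a small piece of $\mathcal{C}$ is a graph over a geodesic segment. Differentiability only gives $\gamma(s)=p+(s-s_0)e+o(\lvert s-s_0\rvert)$. That permits switchbacks of size $o(2^{-k})$ at every scale $2^{-k}$, so $\mathcal{C}$ need not be a graph on any neighbourhood of $p$.

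\textbf{The rectangle variant fails as written, for the same reason.} Suppose you fix the subarc $\gamma([s_0-h,s_0+h])$ first and draw the chord between its endpoints. The kept curve just past $\gamma(s_0+h)$ may switch back across the chord. Shrinking the rectangle's width cannot help, because the trouble sits at an endpoint through which the kept curve passes.

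\textbf{The fallback leaves the hard part unproved.} It is essentially the paper's route, but an inscribed geodesic polygon of a Jordan curve need not be simple. Producing simple ones of small mesh is exactly what the paper takes from Boedihardjo and Geng. It then rounds the corners (Proposition~\ref{prop:approx}) and wiggles a normal graph (Lemma~\ref{lem:wiggle}).

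\textbf{How to repair the local route.} Choose the rectangle first and the subarc second.
\begin{enumerate}
\item Work in a gnomonic chart at $p$, where geodesics are lines, and let $c$ be the image of $\gamma$. Suppose $\lvert c(s)-(s-s_0)e\rvert\le\epsilon\lvert s-s_0\rvert$ for $\lvert s-s_0\rvert\le h_0$, with $\epsilon<1/2$.
\item Take $R=\{ae+be^{\perp}:\lvert a\rvert\le h,\ \lvert b\rvert\le2\epsilon h\}$, with $h$ small compared with $h_0$ and with $\dist(p,\gamma(\{\lvert s-s_0\rvert\ge h_0\}))$.
\item Then $\mathcal{C}\cap R$ comes only from parameters with $\lvert s-s_0\rvert\le h/(1-\epsilon)$. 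It satisfies $\lvert b\rvert<2\epsilon h$, so it reaches $\partial R$ only on the short sides.
\item Let $s_-$ and $s_+$ be the first and last parameters of the window at which $\gamma\in R$. The kept arc meets $R$ only in $\gamma(s_\pm)$, and these lie on opposite short sides.
\item Replace $\gamma([s_-,s_+])$ by the graph over the $a$-axis of the linear interpolation plus $t\sin\bigl(m\pi(a+h)/(2h)\bigr)$, with $m$ large and $t\ge0$ small. This graph is simple, lies in $R$, and meets the kept arc only at its endpoints.
\item At $t=0$ the new arc is the geodesic chord, of length at most $s_+-s_-$. The intermediate value theorem in $t$ then gives total length exactly $L'$.
\item Every removed point is within $2h/(1-\epsilon)$ of $\gamma(s_-)$, which gives the Hausdorff control you need.
\end{enumerate}

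\textbf{Comparison with the paper.} Once repaired, your argument is self-contained and more elementary than the paper's. You do surgery directly on the rectifiable curve; the paper avoids that surgery but pays with the Boedihardjo--Geng theorem. The paper can afford that cost because it needs Proposition~\ref{prop:approx} anyway for Lemma~\ref{lem:collar}(a).
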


\begin{proof}
Let $d_{H}$ denote the Hausdorff distance. Since
$\lvert\dist(x,\mathcal{C})-\dist(x,\mathcal{C}')\rvert
\le d_{H}(\mathcal{C},\mathcal{C}')$ for every $x\in\Sph$, the functional $\mathcal{J}$ is
$1$-Lipschitz for $d_{H}$.

Let $0<L<L'$, let $\mathcal{C}$ be a curve of length $L$, and let
$\delta>0$. Apply Proposition~\ref{prop:approx} with an index $j$ so large that
$\delta_{j}<\min\{\delta,\,L'-L\}$: it produces a $C^{\infty}$ simple closed
curve $\mathcal{C}_{0}$ with $d_{H}(\mathcal{C}_{0},\mathcal{C})<\delta$ and
$\length(\mathcal{C}_{0})\le L+\delta_{j}<L'$. Lemma~\ref{lem:wiggle} then provides a
simple closed curve $\mathcal{C}'$ of length exactly $L'$ with
$d_{H}(\mathcal{C}',\mathcal{C}_{0})<\delta$. As $\mathcal{J}$ is $1$-Lipschitz for
$d_{H}$,
\[
  J(L')\ \le\ \mathcal{J}(\mathcal{C}')\ \le\ \mathcal{J}(\mathcal{C})+2\delta .
\]
Taking the infimum over $\mathcal{C}$ gives
$J(L')\le J(L)+2\delta$, and $\delta\downarrow0$ proves the
monotonicity. In \eqref{eq:bracket} the left inequality is
Corollary~\ref{cor:bound} and the right one follows from monotonicity and
Theorem~\ref{thm:main}, which give $J(L)\le J(L_{n})=j(L_{n})$.
\end{proof}

\begin{question}
Classify all minimizers at $L=L_{n}$, and determine $J(L)$ for $L>2\pi$ with
$L\notin\{L_{n}\}$. Whether the bound $j(L)$ is attained at such an $L$ is open:
by Proposition~\ref{prop:equality} a curve of that length with
$\mathcal{J}=j(L)$ would bisect $\Sph$ into two disks of inradius $\vartheta(L)$
and would attain the covering bound of Corollary~\ref{cor:cover} at the radius
$\vartheta(L)\notin\{\vartheta_{n}\}$, which Proposition~\ref{prop:classify}
rules out for curves of thickness at least $\sin\vartheta$ but not in general.
Even the existence of a minimizer at a fixed $L$ is open, since
$\{\mathcal{C}:\length(\mathcal{C})=L\}$ is not closed for $d_{H}$ and length is
only lower semicontinuous.
\end{question}

\appendix

\section{The inner-parallel inequality}\label{app:hartman}

Throughout, $\Omega$, $\alpha$, $L$, $f$ and $r$ are as in
Subsection~\ref{ssec:parallels} and $\partial\Omega$ is of class $C^{2}$. By
\eqref{eq:f} the inward collar of depth $\theta$ is
$\Omega\cap T_{\theta}(\mathcal{C})$, of area $f(\theta)$.
Parametrize $\gamma=\partial\Omega$ by arclength $s$, let $\mathbf{n}$ be the
inward unit normal and $\kappa=\langle\gamma'',\mathbf{n}\rangle$ the
corresponding geodesic curvature, and put
$\Phi(s,\theta)=\cos\theta\,\gamma(s)+\sin\theta\,\mathbf{n}(s)$, so that
$\theta\ge0$ measures depth into $\Omega$. Since $\mathbf{n}'=-\kappa\gamma'$,
\begin{equation}\label{eq:fermi}
  \partial_{s}\Phi=\mu(s,\theta)\,\gamma'(s),\qquad
  \mu(s,\theta)=\cos\theta-\kappa(s)\sin\theta,\qquad
  \partial_{\theta}^{2}\mu+\mu=0 ,
\end{equation}
so the parallel metric is $d\theta^{2}+\mu^{2}ds^{2}$, with $\mu(s,0)=1$ and
$\partial_{\theta}\mu(s,0)=-\kappa(s)$. Gauss--Bonnet gives
\begin{equation}\label{eq:gb}
  \int_{0}^{L}\kappa\,ds=2\pi-\alpha .
\end{equation}

Hartman works with a complete metric on $\R^{2}$ whose coefficients are of class
$C^{2}$ and whose curvature $K$ is of class $C^{1}$ \cite[\S1]{Hartman64}.
Stereographic projection from a point of $\Sph\setminus\overline{\Omega}$ makes
$\mathcal{C}$ a $C^{2}$ Jordan curve in $\R^{2}$ with $\Omega$ inside it, and
carries the spherical metric to coefficients of that regularity, though not to a
complete metric; interpolating smoothly to the Euclidean metric outside a compact
neighborhood of the image of $\overline{\Omega}$ restores completeness and changes
nothing near $\overline{\Omega}$. Truncating a path from a point of $\Omega$ at
its first boundary hit keeps it in $\overline{\Omega}$, where the metric is
unchanged, so
$\dist(\,\cdot\,,\partial\Omega)$ and $f$ are those of $\Sph$.

His metric coefficient is our $\mu$; his normal parameter $p$ is positive
outside the curve \cite[\S2]{Hartman64}, so $p=-\theta$; and his geodesic
curvature, normalized by $\partial_{p}\mu|_{p=0}$ \cite[(2.4)]{Hartman64}, is
our $\kappa$.

\begin{proof}[Proof of Lemma~\ref{lem:parallel}]
A minimizing geodesic from an interior point meets $\mathcal{C}$ orthogonally, so
the part of $\Omega$ reached by the minimizing inward normal arcs of length at
most $\theta$ is exactly the collar $\Omega\cap T_{\theta}(\mathcal{C})$. Thus $f$ is
Hartman's area function and $f'$ his parallel length, of bounded variation
\cite[Prop.~5.1, (6.8), (6.29), (6.30), Thm.~6.2, Cor.~6.2]{Hartman64}. At
$p=-\theta$ his $p$-locus is the inward parallel, of length $f'(\theta)$, and
the region it bounds with $\mathcal{C}$ is the collar
$\Omega\cap T_{\theta}(\mathcal{C})$, so his inward differential inequality
\cite[(6.4)]{Hartman64}, integrated as in \cite[Cor.~6.1]{Hartman64}, gives for
$0<\theta_{1}<\theta_{2}<r$
\[
  f'(\theta_{2})-f'(\theta_{1})\ \le\
  -\int_{\theta_{1}}^{\theta_{2}}
    \Bigl(\int_{0}^{L}\kappa\,ds
      +\int_{\Omega\cap T_{\theta}(\mathcal{C})}K\,dA\Bigr)d\theta
  \ =\ \int_{\theta_{1}}^{\theta_{2}}\bigl(\alpha-2\pi-f(\theta)\bigr)d\theta ,
\]
the last step by \eqref{eq:gb} and $K\equiv1$. This is \eqref{eq:ode}. (His
inward range runs down to the infimum $\bar{p}$ of his inward extreme values
\cite[\S6]{Hartman64}, and $\bar{p}\le-r$ because the deepest point of $\Omega$
lies on a minimizing inward normal arc \cite[Prop.~5.1]{Hartman64}.)

Since $\mathcal{C}$ is $C^{2}$ and compact, $\Phi$ is a $C^{1}$ map whose
Jacobian equals $\mu(s,0)=1$ along $\theta=0$, hence a diffeomorphism of
$(\R/L\mathbb{Z})\times[0,\varepsilon]$ onto a collar for some $\varepsilon>0$;
for $\theta\le\varepsilon$ every inward normal arc of length $\theta$ still
minimizes, so $f'(\theta)=\int_{0}^{L}\mu(s,\theta)\,ds$ there. Hence $f'''+f'=0$
by \eqref{eq:fermi}, with $f'(0)=L$ and
$f''(0)=-\int_{0}^{L}\kappa=\alpha-2\pi$ by \eqref{eq:gb}, and solving gives
\eqref{eq:fprime}. There $f''=\alpha-2\pi-f$, so the displayed inequality is an
equality before the first cut value.
\end{proof}

\section{Approximation}\label{app:approx}

\begin{lemma}\label{lem:wiggle}
Let $\mathcal{C}\subset\Sph$ be a $C^{\infty}$ simple closed curve of length $L$.
For all $L'>L$ and $\delta>0$ there is a $C^{\infty}$ simple closed curve
$\mathcal{C}'\subset\Sph$ of length $L'$ with
$d_{H}(\mathcal{C},\mathcal{C}')<\delta$.
\end{lemma}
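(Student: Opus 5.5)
We perturb $\mathcal{C}$ normally by a small, rapidly oscillating wiggle and use continuity of length in the amplitude. Parametrize $\mathcal{C}$ by arclength as $\gamma\colon\R/L\mathbb{Z}\to\Sph$, let $\mathbf{n}$ be a $C^{\infty}$ unit normal field along $\gamma$ tangent to $\Sph$ (e.g.\ $\mathbf{n}=\gamma\times\gamma'$), and for a parameter $a\ge0$ and an integer $m\ge1$ put
\[
  \gamma_{a}(s)=\cos\bigl(a\sin(2\pi ms/L)\bigr)\,\gamma(s)+\sin\bigl(a\sin(2\pi ms/L)\bigr)\,\mathbf{n}(s),
\]
which moves $\gamma(s)$ a geodesic distance $a\lvert\sin(2\pi ms/L)\rvert\le a$ along the normal geodesic. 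Each $\gamma_{a}$ is a $C^{\infty}$ closed curve on $\Sph$ with $d_{H}(\gamma_{a},\mathcal{C})\le a$, and $\gamma_{0}=\gamma$.

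\emph{Step 1 (simplicity).} Since $\mathcal{C}$ is compact and $C^{2}$, the normal exponential map $\Phi(s,t)=\cos t\,\gamma(s)+\sin t\,\mathbf{n}(s)$ is a diffeomorphism of $(\R/L\mathbb{Z})\times(-\varepsilon,\varepsilon)$ onto a tubular neighborhood for some $\varepsilon>0$, as in Appendix~\ref{app:hartman}. For $a<\varepsilon$ the curve $\gamma_{a}$ is the image under $\Phi$ of the graph $t=a\sin(2\pi ms/L)$. A graph over the circle is simple, so $\gamma_{a}$ is a simple closed $C^{\infty}$ curve for every $a\in[0,\varepsilon)$ and every $m$.

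\emph{Step 2 (length can be made large).} The length $\ell(a,m)=\int_{0}^{L}\lvert\gamma_{a}'(s)\rvert\,ds$ is continuous in $a$, because $\gamma_{a}'$ depends continuously and uniformly on $a$. In the Fermi metric $dt^{2}+\mu^{2}ds^{2}$ we have $\lvert\gamma_{a}'\rvert\ge\lvert\partial_{s}t\rvert=a(2\pi m/L)\lvert\cos(2\pi ms/L)\rvert$. Integrating gives $\ell(a,m)\ge 4am$. Fix $a_{0}=\tfrac12\min\{\varepsilon,\delta\}$ and choose $m$ with $4a_{0}m>L'$; then $\ell(a_{0},m)>L'$.

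\emph{Step 3 (intermediate value).} With $m$ fixed, $a\mapsto\ell(a,m)$ is continuous on $[0,a_{0}]$, with $\ell(0,m)=L<L'<\ell(a_{0},m)$. Hence some $a\in(0,a_{0})$ gives length exactly $L'$. Take $\mathcal{C}'=\gamma_{a}$. It is $C^{\infty}$ and simple by Step~1, and $d_{H}(\mathcal{C}',\mathcal{C})\le a<\delta$.

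The only delicate point is Step~1, the uniform injectivity of $\Phi$ on a fixed strip independent of $m$. It is standard for a compact embedded $C^{2}$ curve via the inverse function theorem plus compactness, and the choice of normal-graph wiggles is made precisely so that simplicity does not degrade as $m\to\infty$.
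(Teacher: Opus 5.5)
Your proposal is correct and is essentially the paper's argument: a normal graph of a rapidly oscillating height function inside the tubular neighborhood stays simple and $\delta$-close, its length is at least the total variation of the height (your $4am$, the paper's $2m\delta_{0}$), and the intermediate value theorem in the amplitude gives length exactly $L'$. The differences are cosmetic: you use the explicit sign-changing profile $a\sin(2\pi ms/L)$ where the paper uses a nonnegative $t\psi$, and you justify the total-variation bound explicitly through the Fermi metric $dt^{2}+\mu^{2}ds^{2}$.
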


\begin{proof}
Fix $\delta_{0}$ strictly less than both $\delta$ and the normal injectivity
radius of $\mathcal{C}$, so that every normal graph of height at most
$\delta_{0}$ is embedded, and let $\mathcal{C}_{t}$ be the graph of $t\psi$ for
$\psi\colon\mathcal{C}\to[0,1]$ smooth and $0\le t\le\delta_{0}$. Then
$\length(\mathcal{C}_{t})$ is continuous in $t$ and equals $L$ at $t=0$, while a
graph is at least as long as the total variation of its height, so if $\psi$
oscillates $m$ times between $0$ and $1$ then
$\length(\mathcal{C}_{\delta_{0}})\ge2m\delta_{0}$, which exceeds $L'$ for $m$
large. Now apply the intermediate value theorem.
\end{proof}

\begin{proposition}\label{prop:approx}
Let $\mathcal{C}\subset\Sph$ be a rectifiable simple closed curve of length $L$,
with complementary components $\Omega_{1},\Omega_{2}$ of areas $\alpha_{i}$ and
inradii $r_{i}$. There are $C^{\infty}$ simple closed curves $\mathcal{C}_{j}$
and $\delta_{j}\downarrow0$ with $d_{H}(\mathcal{C}_{j},\mathcal{C})\le\delta_{j}$
and $\length(\mathcal{C}_{j})\le L+\delta_{j}$, whose complementary components
$\Omega_{1}^{j},\Omega_{2}^{j}$ can be labeled so that
\begin{equation}\label{eq:sides}
  \Omega_{i}\cap\{\dist(\,\cdot\,,\mathcal{C})>\delta_{j}\}\ \subseteq\ \Omega_{i}^{j},
  \qquad i=1,2 .
\end{equation}
Their areas and inradii then satisfy $\alpha_{i}^{j}\to\alpha_{i}$ and
$r_{i}^{j}\ge r_{i}-\delta_{j}$ for large $j$.
\end{proposition}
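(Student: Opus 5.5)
The natural route is to mollify a polygonal approximation, or to use a conformal parametrization. I propose the following.

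\emph{Step 1: the approximating curves.} Parametrize $\mathcal{C}$ by arclength and inscribe geodesic polygons $P_{j}$ whose vertices are $N_{j}$ points equally spaced in arclength. Each $P_{j}$ has length at most $L$, and its Hausdorff distance to $\mathcal{C}$ is at most $L/N_{j}$, since each chord stays within half an arc length of its arc. Inscribed polygons need not be simple, so I would instead work in a stereographic chart from a point of $\Omega_{2}$ far from $\mathcal{C}$ and use a Jordan-curve approximation. A clean choice is the level sets of a conformal map: let $\phi\colon\mathbb{D}\to\Omega_{1}$ be a Riemann map and take $\phi(\{|z|=\rho\})$ for $\rho\uparrow1$. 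These are analytic simple closed curves. Since $\mathcal{C}$ is rectifiable, $\phi'$ lies in the Hardy space $H^{1}$ (F.~and M.~Riesz), and the length of $\phi(\{|z|=\rho\})$ equals $\int|\phi'(\rho e^{it})|\rho\,dt$, which increases in $\rho$ to $L$; the length computed in the chart converts to spherical length by continuity of the conformal factor. Uniform convergence of $\phi(\rho e^{it})\to\phi(e^{it})$ (Carathéodory) gives Hausdorff convergence. So I would set $\delta_{j}$ to be the larger of the Hausdorff distance and the length excess, plus $1/j$.

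\emph{Step 2: the inclusion \eqref{eq:sides}.} Each $\mathcal{C}_{j}$ lies within $\delta_{j}$ of $\mathcal{C}$. Hence the set $U_{i}=\Omega_{i}\cap\{\dist(\cdot,\mathcal{C})>\delta_{j}\}$ misses $\mathcal{C}_{j}$. I need each connected piece of $U_{i}$ to lie in the correct component. For $i=1$, the curve $\phi(\{|z|=\rho\})$ bounds $\phi(\{|z|<\rho\})\subset\Omega_{1}$. A point of $U_{1}$ outside it would lie in $\phi(\{\rho\le|z|<1\})$. Every point of that annulus image is joined by the radial path to $\mathcal{C}$, and that path has length tending to $0$ uniformly as $\rho\to1$ by uniform continuity up to the boundary. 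After enlarging $\delta_{j}$ if necessary, such points are within $\delta_{j}$ of $\mathcal{C}$, which is a contradiction. For $i=2$, $U_{2}\subset\Omega_{2}$ is disjoint from $\overline{\Omega_{1}}\supset\phi(\{|z|<\rho\})$, so it lies in the other component. Taking $\delta_{j}$ to dominate the oscillation $\sup_{t}\sup_{\rho\le s<1}\dist(\phi(se^{it}),\phi(e^{it}))$ makes this automatic. I expect this step to be the main obstacle, because it requires controlling how the whole annular image approaches $\mathcal{C}$, not just the curve itself. Uniform continuity of the Carathéodory extension on the closed disk gives exactly this control.

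\emph{Step 3: consequences.} From \eqref{eq:sides}, $\alpha_{i}^{j}\ge\area(\Omega_{i}\cap\{\dist>\delta_{j}\})$. The right side tends to $\alpha_{i}$ because $\bigcap_{j}\{0<\dist\le\delta_{j}\}\cap\Omega_{i}=\emptyset$, by monotone convergence. Since $\alpha_{1}^{j}+\alpha_{2}^{j}=4\pi=\alpha_{1}+\alpha_{2}$, both liminf bounds force $\alpha_{i}^{j}\to\alpha_{i}$. For inradii, take $x\in\Omega_{i}$ with $\dist(x,\mathcal{C})=r_{i}$. For large $j$ we have $r_{i}>\delta_{j}$, so $x\in\Omega_{i}^{j}$. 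Since $d_{H}(\mathcal{C}_{j},\mathcal{C})\le\delta_{j}$, we get $\dist(x,\mathcal{C}_{j})\ge r_{i}-\delta_{j}$, hence $r_{i}^{j}\ge r_{i}-\delta_{j}$.
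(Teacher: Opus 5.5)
Your proposal is correct and takes a genuinely different route from the paper. The paper builds its approximants from simple inscribed geodesic polygons (the Boedihardjo--Geng theorem) and rounds the corners. The bound $\length(\mathcal{C}_j)\le L+\delta_j$ then comes directly from ``chord $\le$ arc''. Because such a polygon may pass to either side of $\mathcal{C}$, the paper must then identify the sides. It does this with a homotopy whose track stays in the closed $\delta_j$-neighborhood of $\mathcal{C}$, plus a winding-number comparison. Your level curves $\phi(\{|z|=\rho\})$ of a Riemann map are real-analytic and lie inside $\Omega_{1}$, so the side question nearly disappears. You get $\Omega_{2}\subseteq\Omega_{2}^{j}$ outright, and the inclusion for $\Omega_{1}$ is just uniform continuity of the Carath\'eodory extension on $\overline{\mathbb{D}}$. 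The price is analytic input in place of a combinatorial theorem on Jordan polygons: Carath\'eodory's theorem, and F.~and M.~Riesz for $\phi'\in H^{1}$ with boundary length $\int\lvert\phi'(e^{it})\rvert\,dt$. Your Step~3 is the paper's argument.

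Two details need tightening.

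First, monotonicity of $\rho\int\lvert\phi'(\rho e^{it})\rvert\,dt$ controls only the Euclidean length in the chart. The spherical length is $\int\lambda(\phi(\rho e^{it}))\lvert\phi'(\rho e^{it})\rvert\rho\,dt$, and uniform convergence of the conformal factor $\lambda\circ\phi$ along the circles is not enough to pass to it. Convergence of $\int h_{\rho}$ does not give convergence of $\int\lambda h_{\rho}$ for a nonconstant weight. What you need is $\phi'(\rho e^{it})\to\phi'(e^{it})$ in $L^{1}(dt)$, which holds for every $H^{1}$ function (equivalently, a.e.\ radial convergence plus Scheff\'e's lemma). With that, the spherical lengths tend to $L$.

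Second, in Step~2, uniform continuity bounds $\dist(\phi(se^{it}),\phi(e^{it}))$, not the length of the radial image. The oscillation bound you state afterwards is the right one, and it is all the argument uses.
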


\begin{proof}
\emph{The approximants.} Fix $\delta\in(0,\pi/2)$. An inscribed geodesic polygon
of a Jordan curve need not be simple, and that is the only thing
\cite{BoedihardjoGeng2015} is needed for: their Theorem~2.2 produces, for a
Jordan curve in a Riemannian manifold and any $\varepsilon>0$, a partition of
its parameter interval of mesh below $\varepsilon$ whose consecutive points are
joined by unique minimizing geodesics and whose piecewise geodesic interpolant
is again a Jordan curve. The mesh is measured in the parameter, so uniform
continuity converts it into a bound on the diameters of the arcs cut off by the
vertices; taking those at most $\delta/2$ gives a simple inscribed geodesic
polygon $\mathcal{P}$ with $d_{H}(\mathcal{P},\mathcal{C})\le\delta/2$ and
$\length(\mathcal{P})\le L$, no side being longer than the arc it replaces.
Rounding the finitely many corners of $\mathcal{P}$ inside pairwise disjoint
disks of radius at most $\delta/2$, each meeting $\mathcal{P}$ only in the two
sides through its center, keeps the curve simple and within $\delta$ of
$\mathcal{C}$, and costs arbitrarily little length if the disks are small
enough. The result is a $C^{\infty}$ simple closed curve of length at most
$L+\delta$; letting $\delta=\delta_{j}\downarrow0$ gives the $\mathcal{C}_{j}$.

Both replacements are local: a side of $\mathcal{P}$ lies with the arc it
interpolates in one ball of radius $\delta_{j}$, and a rounded corner lies with
the corner it replaces in that corner's disk. Parametrizing the two curves so
that corresponding points share such a ball and sliding each point of
$\mathcal{C}$ to its partner along the minimizing geodesic between them is
therefore a homotopy of loops whose track stays in the closed
$\delta_{j}$-neighborhood of $\mathcal{C}$.

\emph{The two sides.} Hausdorff closeness alone does not say which component of
$\Sph\setminus\mathcal{C}_{j}$ lies on which side of $\mathcal{C}$; the track of
the homotopy does. Fix $x_{0}\in\Omega_{1}$, discard the finitely many $j$ with
$\delta_{j}\ge\dist(x_{0},\mathcal{C})$, let $\Omega_{1}^{j}$ be the component
of $\Sph\setminus\mathcal{C}_{j}$ containing $x_{0}$, and identify
$\Sph\setminus\{x_{0}\}$ with $\R^{2}$, so that $\Omega_{1}$ and
$\Omega_{1}^{j}$ become the unbounded components. A point $x\ne x_{0}$ with
$\dist(x,\mathcal{C})>\delta_{j}$ lies off the track and off $\mathcal{C}_{j}$,
so $\mathcal{C}$ and $\mathcal{C}_{j}$ are homotopic in $\R^{2}\setminus\{x\}$
and wind equally about it; a Jordan curve winds $0$ outside and $\pm1$ inside,
so $x\in\Omega_{1}$ if and only if $x\in\Omega_{1}^{j}$. With $x_{0}$ itself
this is \eqref{eq:sides}, for $i=1$ and hence for $i=2$.

\emph{Areas and inradii.} The sets in \eqref{eq:sides} increase to $\Omega_{i}$,
every point of which is at positive distance from $\mathcal{C}$, so
$\liminf_{j}\alpha_{i}^{j}\ge\alpha_{i}$ for $i=1,2$; as both pairs of areas sum
to $4\pi$, the two bounds together force $\alpha_{i}^{j}\to\alpha_{i}$. Finally,
a point $x\in\Omega_{i}$ realizing $r_{i}$ lies in $\Omega_{i}^{j}$ as soon as
$\delta_{j}<r_{i}$, and then
$r_{i}^{j}\ge\dist(x,\mathcal{C}_{j})\ge r_{i}-\delta_{j}$.
\end{proof}

\begin{proof}[Proof of Lemma~\ref{lem:collar}(a) for rectifiable $\mathcal{C}$]
Label the two components so that $\Omega=\Omega_{1}$ (the labeling of
Subsection~\ref{ssec:tube} plays no role here) and apply
Proposition~\ref{prop:approx}. Write $f^{j},\alpha^{j},r^{j}$ for the collar
area, area and inradius of $\Omega_{1}^{j}$, and put
\[
  F^{j}(\theta)=(L+\delta_{j})\sin\theta+(\alpha^{j}-2\pi)(1-\cos\theta).
\]
Because $\length(\mathcal{C}_{j})\le L+\delta_{j}$ and $\sin\theta\ge0$ on
$[0,\pi]$, the comparison function \eqref{eq:F} of $\Omega_{1}^{j}$ is at most
$F^{j}$; since $\mathcal{C}_{j}$ is $C^{\infty}$, the case already proved
gives $f^{j}\le F^{j}$ on $[0,r^{j}]$. Beyond $r^{j}$ the collar is full, so
$f^{j}(\theta)\le F^{j}(\min\{\theta,r^{j}\})$ for every $\theta\ge0$.

Fix $\theta\le r$. A point of the collar $\Omega\cap T_{\theta}(\mathcal{C})$
lying at distance more than $\delta_{j}$ from $\mathcal{C}$ lies in
$\Omega_{1}^{j}$ by \eqref{eq:sides}, and within $\theta+\delta_{j}$ of
$\mathcal{C}_{j}$; hence
\[
  \area\bigl(\Omega\cap T_{\theta}(\mathcal{C})
    \cap\{\dist(\,\cdot\,,\mathcal{C})>\delta_{j}\}\bigr)
  \ \le\ f^{j}(\theta+\delta_{j}) .
\]
The sets on the left increase to $\Omega\cap T_{\theta}(\mathcal{C})$, whose
area is $f(\theta)$, so
$f(\theta)\le\liminf_{j}f^{j}(\theta+\delta_{j})
\le\liminf_{j}F^{j}(\min\{\theta+\delta_{j},r^{j}\})$, and that limit is
$F(\theta)$: the $F^{j}$ converge to $F$ uniformly, since $\delta_{j}\to0$ and
$\alpha^{j}\to\alpha$, while $\min\{\theta+\delta_{j},r^{j}\}\to\theta$, since
$r^{j}\ge r-\delta_{j}\ge\theta-\delta_{j}$. Thus $f\le F$ on $[0,r]$.

If moreover $\alpha\ge2\pi$ and $r\le\theta\le\pi/2$, then $F'\ge0$ on
$[0,\pi/2]$, so $f(\theta)=\alpha=f(r)\le F(r)\le F(\theta)$.
\end{proof}

\section*{Acknowledgments}

The author thanks Anthropic's Claude Opus 5 for its assistance throughout this
work, particularly with the references and proofs in the appendices.

\bibliographystyle{plain}
\bibliography{refs}

\end{document}